\newcommand{\R}{\mathbb{R}}
\newcommand{\Rn}{{\R^n}}
\newcommand{\eps}{\varepsilon}
\newcommand{\cS}{{\mathscr{S}}}
\newcommand{\cQ}{{\mathcal{Q}}}
\newcommand{\cB}{{\mathcal{B}}}
\newcommand{\cR}{{\mathcal{R}}}
\newcommand{\cI}{{\mathcal{I}}}
\newcommand{\cC}{{\mathcal{C}}}
\newcommand{\cF}{{\mathcal{F}}}
\newcommand{\Lp}{{L^p}}
\newcommand{\Lone}{{L^1}}
\newcommand{\Loneloc}{{L^1_{\text{loc}}}}
\DeclareMathOperator*{\essinf}{ess\,inf}
\def\BMO#1#2{\mathrm{BMO}_{#1}^{#2}}
\def\BLO#1#2{\mathrm{BLO}_{#1}^{#2}}
\def\bBMO#1{\mathrm{\bf{BMO}}{}{#1}}
\def\norm#1#2{\lVert{#1}\rVert_{#2}}
\def\Xint#1{\mathchoice
{\XXint\displaystyle\textstyle{#1}}%
{\XXint\textstyle\scriptstyle{#1}}%
{\XXint\scriptstyle\scriptscriptstyle{#1}}%
{\XXint\scriptscriptstyle\scriptscriptstyle{#1}}%
\!\int}
\def\XXint#1#2#3{{\setbox0=\hbox{$#1{#2#3}{\int}$ }
\vcenter{\hbox{$#2#3$ }}\kern-.57\wd0}}
\def\dashint{\Xint-}
\newtheorem*{open}{Open Problem}
\newtheorem*{introtheorem}{Theorem}
\newtheorem{theorem}{Theorem}[section]
\newtheorem{proposition}[theorem]{Proposition}
\theoremstyle{definition}
\newtheorem{definition}[theorem]{Definition}
\newtheorem{example}[theorem]{Example}
\theoremstyle{remark}
\newtheorem{remark}[theorem]{Remark}
\numberwithin{equation}{section}
\begin{document}


\title[{\bf Geometric maximal operators and $\bBMO{}{}$ on product bases}]{\bf Geometric maximal operators and $\bBMO{}{}$ on product bases}
\date{}

\author[Dafni]{Galia Dafni}
\address{(G.D.) Concordia University, Department of Mathematics and Statistics, Montr\'{e}al, Qu\'{e}bec, H3G-1M8, Canada}
\curraddr{}
\email{galia.dafni@concordia.ca}
\thanks{G.D. was partially supported by the Natural Sciences and Engineering Research Council (NSERC) of Canada, the Centre de recherches math\'{e}matiques (CRM), and the Fonds de recherche du Qu\'{e}bec -- Nature et technologies (FRQNT). R.G. was partially supported by the Centre de recherches math\'{e}matiques (CRM), the Institut des sciences math\'{e}matiques (ISM), and the Fonds de recherche du Qu\'{e}bec -- Nature et technologies (FRQNT). H.Y. was partially supported by GCSU Faculty Development Funds. }

\author[Gibara]{Ryan Gibara}
\address{(R.G.) Concordia University, Department of Mathematics and Statistics, Montr\'{e}al, Qu\'{e}bec, H3G-1M8, Canada}
\curraddr{}
\email{ryan.gibara@concordia.ca}

\author[Yue]{Hong Yue}
\address{(H.Y.) Georgia College and State University, Department of Mathematics, Milledgeville, Georgia, 31061, USA}
\curraddr{}
\email{hong.yue@gcsu.edu}

\begin{abstract} 
We consider the problem of the boundedness of maximal operators on $\BMO{}{}$ on shapes in $\R^n$. We prove that for bases of shapes with an engulfing property, the corresponding maximal function is bounded from $\BMO{}{}$ to $\BLO{}{}$, generalising a known result of Bennett for the basis of cubes. When the basis of shapes does not possess an engulfing property but exhibits a product structure with respect to lower-dimensional shapes coming from bases that do possess an engulfing property, we show that the corresponding maximal function is bounded from $\BMO{}{}$ to a space we define and call rectangular $\BLO{}{}$. 
\end{abstract}

\maketitle


\section{{\bf Introduction}}


The uncentred Hardy-Littlewood maximal function, $Mf$, of a function $f\in \Loneloc(\R^n)$ is defined as
\begin{equation}
\label{maximal}
Mf(x)=\sup_{Q\ni{x}}\dashint_{Q}\!|f|=\sup_{Q\ni{x}}\frac{1}{|Q|}\int_{Q}\!|f|,
\end{equation}
where the supremum is taken over all cubes $Q$ containing the point $x$ and $|Q|$ is the measure of the cube. Note that, unless otherwise stated, cubes in this paper will mean cubes with sides parallel to the axes. The well-known Hardy-Littlewood-Wiener theorem states that the operator $M$ is bounded from $\Lp(\R^n)$ to $\Lp(\R^n)$ for $1<p\leq\infty$ and from $\Lone(\R^n)$ to $L^{1,\infty}(\R^n)$ (see Stein \cite{st}). 

This maximal function is a classical object of study in real analysis due to its connection with differentiation of the integral. When the cubes in \eqref{maximal} are replaced by rectangles (the Cartesian product of intervals), we have the strong maximal function, $M_s$, which is also bounded from $\Lp(\R^n)$ to $\Lp(\R^n)$ for $1<p\leq\infty$ but is not bounded from $\Lone(\R^n)$ to $L^{1,\infty}(\R^n)$. Its connection to what is known as strong differentiation of the integral is also quite classical (see Jessen-Marcinkiewicz-Zygmund \cite{jmz}).

When the cubes in \eqref{maximal} are replaced by more general sets taken from a basis $\cS$, we obtain a geometric maximal operator, $M_{\cS}$ (we follow the nomenclature of \cite{hs}). Here the subscript $\cS$ emphasizes that the behaviour of this operator depends on the geometry of the sets in $\cS$, which we call shapes. Such maximal operators have been extensively studied; see, for instance, the monograph of de Guzm\'{a}n (\cite{guz}). A key theme in this area is the identification of the weakest assumptions needed on $\cS$ to guarantee certain properties of $M_{\cS}$. For examples of the kind of research currently being done in this area, including its connection to the theory of $A_p$ weights, see \cite{glpt,hlp,hs,lpr,stok1,stok2}. 

Introduced by John and Nirenberg in \cite{jn} for functions supported on a cube, the space of functions of bounded mean oscillation, $\BMO{}{}(\R^n)$, is the set of all $f\in \Loneloc(\R^n)$ such that
\begin{equation}
\label{bmo}
\sup_{Q}\dashint_{Q}\!|f-f_Q|<\infty,
\end{equation}
where $f_Q=\dashint_{Q}f$ is the mean of $f$ over the cube $Q$ and the supremum is taken over all cubes $Q$. 

An important subset of $\BMO{}{}(\R^n)$, introduced by Coifman and Rochberg in \cite{cr}, is the class of functions of bounded lower oscillation, $\BLO{}{}(\R^n)$. The definition of this class is obtained by replacing the mean $f_Q$ in \eqref{bmo} by $\essinf\limits_{Q}f$, the essential infimum of $f$ on the cube $Q$.

Just as cubes can be replaced by rectangles in the definition of the maximal function, the same can be done with the definition of $\BMO{}{}(\R^n)$ (and, likewise, with $\BLO{}{}(\R^n)$). The resulting space, strong $\BMO{}{}$, has appeared in the literature under different names (see \cite{cs,dlowy,kor}). 

Pushing the analogy with maximal functions even further, one may replace the cubes in \eqref{bmo} by more general shapes, coming from a basis $\cS$. This space, $\BMO{\cS}{}(\R^n)$, was introduced in previous work of two of the authors in \cite{dg}. In this work, a product characterisation of $\BMO{\cS}{}(\R^n)$ was shown when the shapes in $\cS$ exhibit some product structure. 

In the two-parameter setting of $\R^n\times\R^m$, there is a related space, rectangular $\BMO{}{}$, that is larger than strong $\BMO{}{}$. The unacquainted reader is invited to see \cite{carl,cf2,rfef,fs} for surveys connecting rectangular $\BMO{}{}$ to the topic of the product Hardy space and its dual, known as product $\BMO{}{}$, which will not be considered in this paper.

Considering shapes in a basis $\cS$ that exhibit a product structure like what was investigated in \cite{dg} naturally leads to a definition of rectangular $\BMO{}{}$ with respect to $\cS$. As will be shown, this product structure can also be exploited to define a rectangular $\BLO{}{}$ space, which can easily be defined in even a multiparameter setting. The relationship between rectangular $\BLO{}{}$ and rectangular $\BMO{}{}$ will be shown to mirror, in some ways, the relationship between $\BLO{}{}$ and $\BMO{}{}$. 

The boundedness of $M$ on $\BMO{}{}(\R^n)$ was first considered by Bennett-DeVore-Sharpley (\cite{bdvs}). They showed that if $Mf\not\equiv\infty$, then $Mf\in \BMO{}{}(\R^n)$ when $f\in \BMO{}{}(\R^n)$. In \cite{ben}, Bennett refined this result, showing that if $Mf\not\equiv\infty$, then $M$ is bounded from $\BMO{}{}(\R^n)$ to $\BLO{}{}(\R^n)$. In fact, he showed the stronger result with $M$ defined by averages of $f$ as opposed to $|f|$. Further work in this direction can be found in \cite{cf,cun,gol,le,ou,saa,zh}.

As the geometric maximal operator $M_{\cS}$ generalises the Hardy-Littlewood maximal operator $M$ and the space $\BMO{\cS}{}(\R^n)$ generalises $\BMO{}{}(\R^n)$, it makes sense to consider the following problem:

\begin{open}
For what bases $\cS$ is the geometric maximal operator $M_{\cS}$ bounded on $\BMO{\cS}{}(\R^n)$?
\end{open}

Although the result of Bennett-DeVore-Sharpley implies that the basis of cubes is one such basis, it is currently unknown whether this holds for the basis of rectangles. 

This problem is the topic of the present paper. The first purpose of the paper is to establish a class of bases for which $M_{\cS}$ is bounded on $\BMO{\cS}{}(\R^n)$. A basis is said to be engulfing if, roughly speaking, one of two intersecting shapes can be expanded to engulf the other without having to grow too large. This class includes the basis of cubes but excludes the basis of rectangles. It is shown, under an assumption on the basis $\cS$, that (see Theorem \ref{benn}):
\begin{introtheorem}[Engulfing bases]
If $\cS$ is an engulfing basis, then $M_{\cS}$ is bounded from $\BMO{\cS}{}(\R^n)$ to $\BLO{\cS}{}(\R^n)$. 
\end{introtheorem}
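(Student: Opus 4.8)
The plan is to adapt Bennett's original argument for the Hardy--Littlewood maximal operator to the abstract engulfing setting. Fix $f\in\BMO{\cS}{}(\R^n)$ with $M_{\cS}f\not\equiv\infty$ (so that, by the usual argument, $M_{\cS}f<\infty$ a.e. and $M_{\cS}f\in\Loneloc$). We must bound, uniformly over shapes $S\in\cS$,
\[
\dashint_S \bigl(M_{\cS}f - \essinf_S M_{\cS}f\bigr).
\]
The standard device is to split $f$ relative to a fixed shape $S$: write $f = (f-f_S)\mathbf{1}_{S^*} + (f-f_S)\mathbf{1}_{(S^*)^c} + f_S$, where $S^*\in\cS$ is a fixed dilate of $S$ supplied by the engulfing property. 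The constant $f_S$ contributes nothing to the oscillation, so $M_{\cS}f \le M_{\cS}\bigl((f-f_S)\mathbf{1}_{S^*}\bigr) + M_{\cS}\bigl((f-f_S)\mathbf{1}_{(S^*)^c}\bigr) + |f_S - \text{(something)}|$; more precisely one estimates the local term by its $L^1$-average over $S$ (using that $M_{\cS}$ is weak-type $(1,1)$ on the engulfing basis, or the Kolmogorov/$L\log L$ estimate, to get $\dashint_S M_{\cS}g \lesssim \dashint_{S^*}|g| \lesssim \onorm{f}{\BMO{\cS}{}}$), and one shows the global term is essentially constant on $S$ up to an additive error controlled by $\onorm{f}{\BMO{\cS}{}}$.

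The key structural steps, in order, are: (1) record the basic consequences of the engulfing property — for any two shapes $S_1, S_2$ with $S_1\cap S_2\ne\varnothing$ and comparable size there is a shape containing both with controlled measure, and iterating this gives the doubling and the chaining needed to compare averages $f_{S_1}$ and $f_{S_2}$ over nearby shapes by $C\onorm{f}{\BMO{\cS}{}}$; (2) prove the weak-$(1,1)$ (hence, via interpolation or a direct Kolmogorov argument, the restricted $L^1$-averaging) bound for $M_{\cS}$ on engulfing bases, which is where a covering lemma for engulfing families is invoked — this is classical (cf. de Guzm\'an) but must be stated in the present generality; (3) for a \emph{fixed} shape $S$, show that for a.e. $x\in S$,
\[
M_{\cS}f(x) \le \max\Bigl\{\, \sup_{S'\ni x,\ S'\subseteq S^*}\dashint_{S'}|f|,\ \ \essinf_{S} M_{\cS}f + C\onorm{f}{\BMO{\cS}{}} \,\Bigr\},
\]
the point being that any shape $S'\ni x$ not contained in the fixed dilate $S^*$ must be large enough that, by engulfing, it is comparable to a shape that also meets the point achieving (near-)$\essinf_S M_{\cS}f$, so its average of $|f|$ differs from $M_{\cS}f$ at that point by at most a $\BMO$-chaining error; (4) integrate this pointwise bound over $S$, handle the local-maximal term by step (2), and conclude.

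The main obstacle I anticipate is step (3): making precise the dichotomy ``either $S'$ is trapped inside the controlled dilate $S^*$, or $S'$ is so large that engulfing lets us compare $\dashint_{S'}|f|$ to the value of $M_{\cS}f$ at a near-infimum point of $S$.'' In the cube case this is transparent because one can literally expand a cube about a center; in an abstract engulfing basis one only has the axiom that an expanded shape $S'^*$ engulfs any intersecting shape of comparable-or-smaller size, so one has to set up the bookkeeping carefully — choosing $S^*$ depending only on $S$ (not on $S'$), controlling how the engulfing constant compounds when one chains from $S'$ through $S^*$ to $S$, and ensuring that ``$\essinf$'' rather than ``$f_S$'' survives on the right-hand side (this is exactly the $\BMO\to\BLO$ improvement, and it comes from the fact that the subtracted quantity is a genuine lower bound for $M_{\cS}f$ on $S$, not merely an average). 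A secondary technical point is that $M_{\cS}f$ is lower semicontinuous-type but we need its essential infimum over $S$ to interact correctly with the a.e. pointwise inequality; this is routine once the inequality in step (3) is established with a uniform constant. I would also need the hypothesis flagged in the introduction (``under an assumption on the basis $\cS$'') — presumably a measurability/Busemann--Feller-type condition ensuring $M_{\cS}f$ is measurable and the covering lemma applies — and I would cite the earlier section of the paper for it rather than re-deriving it here.
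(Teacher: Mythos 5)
Your overall architecture --- split $f$ relative to a fixed expanded shape, bound the local piece by a maximal-function estimate, and show the far piece has averages controlled by $\essinf_S M_{\cS}f$ plus a BMO error via engulfing --- is exactly the paper's (Bennett's) architecture, and your step (3) is in substance the paper's two-case analysis: a shape $T\ni z_0$ either lies in $\widetilde S$, in which case $\dashint_T |h|\le \essinf_S M_{\cS}f$, or meets $\widetilde S^c$, in which case a \emph{single} application of the engulfing axiom produces $\overline T\supset \widetilde S\cup T$ with $|\overline T|\le c_e|T|$ and hence $\dashint_T|h|\le c_e\|f\|_{\BMO{\cS}{p}}+\essinf_S M_{\cS}f$; no iterated chaining or compounding of constants is needed, since $S\subset\widetilde S\subset\overline T$ directly.

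The genuine gap is your step (2), the local term. You propose to control $\dashint_S M_{\cS}\bigl((f-f_S)\chi_{S^*}\bigr)$ by a weak-$(1,1)$ bound for $M_{\cS}$ obtained from a covering lemma for engulfing bases. Neither ingredient is available: the engulfing axioms do not yield a Vitali-type covering lemma or weak-$(1,1)$, and the paper never claims this; the actual ``assumption on the basis'' flagged in the introduction is that $M_{\cS}$ is bounded on $L^p(\R^n)$ for some $p\in(1,\infty)$ (not a Busemann--Feller measurability condition --- measurability is automatic since shapes are open). Moreover, even granting weak-$(1,1)$, the estimate $\dashint_S M_{\cS}g\lesssim \dashint_{S^*}|g|$ is false at this level of generality: Kolmogorov's inequality gives only $L^q$ averages with $q<1$, and an $L^1$ average of the maximal function of an $L^1$ function requires $L\log L$ control of $g$, i.e.\ some John--Nirenberg self-improvement of the $p=1$ seminorm. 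For general engulfing bases this self-improvement fails (the paper's example $\cQ_c$ of centred cubes is an engulfing basis admitting no John--Nirenberg inequality), which is precisely why the theorem is stated with the $\BMO{\cS}{p}$ seminorm: the local piece is estimated by H\"older and the assumed $L^p$ bound, giving $\dashint_S M_{\cS}g\le A_p c_d^{1/p}\|f\|_{\BMO{\cS}{p}}$, and the conclusion for $\BMO{\cS}{}$ itself is drawn only under the additional hypothesis $\BMO{\cS}{p}(\R^n)\cong\BMO{\cS}{}(\R^n)$. As written, your plan to run the whole argument with the $p=1$ seminorm and a covering-lemma-based weak-type estimate cannot be completed.
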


As an intermediary step to defining and studying rectangular $\BLO{}{}$ spaces, the product nature of $\BLO{\cS}{}(\R^n)$ is studied in more detail. When the shapes exhibit a certain product structure, it is shown that a product decomposition for $\BLO{\cS}{}(\R^n)$ holds (see Theorem \ref{thispaper}). This is analogous to what was done for $\BMO{\cS}{}(\R^n)$ in \cite{dg}.

The third purpose of this paper is to address the situation when $\cS$ does not possess an engulfing property but is instead a product basis. By this we mean that the shapes in $\cS$ exhibit some product structure with respect to lower-dimensional shapes coming from bases that {\emph {do}} have engulfing. Purely using this product structure, the following theorem is shown in section 6, under certain assumptions on the basis $\cS$ (see Theorem \ref{theorem} for the exact statement):
\begin{introtheorem}[Product bases]
If $\cS$ is a strong product basis, then $M_{\cS}$ is bounded from $\BMO{\cS}{}(\R^n)$ to rectangular $\BLO{\cS}{}(\R^{n_1}\times\R^{n_2}\times\cdots\times\R^{n_k})$, where $n_1+n_2+\ldots+n_k=n$. 
\end{introtheorem}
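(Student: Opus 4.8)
The plan is to run Bennett's argument for cubes, but one factor at a time, so that the engulfing property of each lower-dimensional basis $\cS_1,\dots,\cS_k$ is used only in a single variable. First I would replace $M_{\cS}$ by the signed operator $\widetilde M_{\cS}g(x)=\sup_{R\ni x}\bigl|\dashint_R g\bigr|$, the supremum over $R\in\cS$ containing $x$: since $M_{\cS}f=\widetilde M_{\cS}(|f|)$ and $\bigl\||f|\bigr\|_{\BMO{\cS}{}}\le 2\|f\|_{\BMO{\cS}{}}$, it suffices to show that $\widetilde M_{\cS}$ maps $\BMO{\cS}{}(\R^n)$ to rectangular $\BLO{\cS}{}$ (under the standing assumption, present in the precise statement, that $M_{\cS}f\not\equiv\infty$). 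Using signed averages is essential: comparable nested shapes have $g$-averages differing by $O(\|g\|_{\BMO{\cS}{}})$, which fails for $\dashint_R|g|$, and this is exactly what makes the errors in the global part additive. By the definition of rectangular $\BLO{\cS}{}$ it is then enough to fix $i\in\{1,\dots,k\}$ and a product shape $S=S_1\times\cdots\times S_k\in\cS$, write points as $(\xihat,x_i)$ with $\xihat$ collecting the remaining variables, and bound $\frac{1}{|S|}\int_S\bigl(\widetilde M_{\cS}g(x)-\essinf_{y_i\in S_i}\widetilde M_{\cS}g(\xihat,y_i)\bigr)\,dx$ by $C\|g\|_{\BMO{\cS}{}}$.

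Fix such $i$ and $S$, and let $S_i^{*}$ be the engulfing dilate of $S_i$ in $\cS_i$. I would decompose $\widetilde M_{\cS}g(\xihat,x_i)=\max\{N_1,N_2\}$, where $N_2$ collects the competing shapes $R=\prod_l S_l'$ whose $i$-th component $S_i'$ is not contained in $S_i^{*}$, and $N_1$ the rest. For $N_2$, with $x_i\in S_i$ the shape $S_i'$ meets $S_i$ but escapes its engulfing dilate, so engulfing in $\cS_i$ supplies $\widetilde{S_i'}\in\cS_i$ with $\widetilde{S_i'}\supseteq S_i'\cup S_i$ and $|\widetilde{S_i'}|\le A|S_i'|$. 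Since $\widetilde{S_i'}\times\prod_{l\ne i}S_l'\in\cS$ contains $(\xihat,y_i)$ for every $y_i\in S_i$, and its $g$-average differs from the one over $S_i'\times\prod_{l\ne i}S_l'$ by at most $A\|g\|_{\BMO{\cS}{}}$, one gets $N_2(\xihat,x_i)\le\essinf_{y_i\in S_i}\widetilde M_{\cS}g(\xihat,y_i)+A\|g\|_{\BMO{\cS}{}}$. This is the only use of engulfing and it is one-dimensional, which is precisely why product bases are tractable although they themselves are not engulfing.

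For the local part I would subtract the partial average $Pg(\xihat):=\dashint_{S_i^{*}}g(\xihat,t)\,dt$, a function of the remaining variables alone. Writing $g=Pg+(g-Pg)$, the $Pg$-term of $N_1$ produces $g$-averages over shapes of the form $S_i^{*}\times\prod_{l\ne i}S_l'$, hence is at most $\essinf_{y_i\in S_i}\widetilde M_{\cS}g(\xihat,y_i)$; the $(g-Pg)$-term is dominated by the composition of $M^{\mathrm{loc}}_{\cS_i}$ (the $\cS_i$-maximal operator in $x_i$, restricted to shapes inside $S_i^{*}$) with the strong maximal operator $\mathcal M^{\widehat i}$ in the remaining variables, applied to $|g-Pg|$. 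Averaging this over $S$ and using (i) the $L^2(S_i^{*})$-boundedness of $M^{\mathrm{loc}}_{\cS_i}$; (ii) a John--Nirenberg inequality for $\cS_i$, which yields $\dashint_{S_i^{*}}|g(\xihat,t)-Pg(\xihat)|^2\,dt\le C\|g\|_{\BMO{\cS}{}}^2$ uniformly in $\xihat$ (here the rectangular $\BMO{\cS}{}$ information carried by $\BMO{\cS}{}$ is used); and (iii) an $L^2$, Fefferman--Stein-type bound for $\mathcal M^{\widehat i}$ together with Cauchy--Schwarz, one bounds the local contribution by $C\|g\|_{\BMO{\cS}{}}$. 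Combining the pieces through $\max\{N_1,N_2\}\le\essinf_{y_i\in S_i}\widetilde M_{\cS}g(\xihat,y_i)+E(\xihat,x_i)+A\|g\|_{\BMO{\cS}{}}$, where $E$ is the $(g-Pg)$-error just estimated, and averaging over $S$, gives the theorem.

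The step I expect to be the main obstacle is item (iii) of the local part. The strong maximal operator $\mathcal M^{\widehat i}$ does not localize the way a single engulfing basis does, so after splitting $|g-Pg|$ into its restriction to a fixed dilate $\prod_{l\ne i}S_l^{*}$ and the remainder, the far piece still has to be controlled on product shapes that are large in at least one of the remaining factors -- exactly the configuration for which averages against a $\BMO{\cS}{}$ function need not be small. I would handle this by a further nested local/global splitting, peeling off one remaining factor at a time and engulfing in that factor (so the proof becomes, in effect, an induction on the number $k$ of factors, with Theorem \ref{benn} as the base case); verifying that the errors produced at successive levels stay summable, and that the bases $\cS_i$ enjoy the auxiliary properties used above (engulfing dilates belong to $\cS_i$, $L^p$-boundedness of $M_{\cS_i}$, John--Nirenberg, and reducibility of the relevant suprema to countable subfamilies -- the structural hypotheses in the precise statement), is the technical core of section 6.
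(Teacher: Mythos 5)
Your reduction to bounding, for a fixed $i$ and every product shape $S$, the quantity $\dashint_S\bigl[M_{\cS}f-\essinf_{S_i}(M_{\cS}f)_{\hat z_i}\bigr]$ is legitimate (it is in fact \emph{stronger} than membership in $\BLO{\text{rec},\cS}{}$, which only requires the maximum over $i$ of these infima inside the integral), and two of your three pieces are sound and close in spirit to the paper: the $N_2$ part (competing shapes whose $i$-th factor escapes $S_i^*$), handled by engulfing in $\cS_i$ alone and comparing averages, and the $Pg$ part of $N_1$, handled by replacing the $i$-th factor by $S_i^*$. The genuine gap is exactly where you flag it, and it is not a removable technicality: for the $(g-Pg)$ part of $N_1$ you must control averages of $|g-Pg|$ over shapes $S_i'\times\prod_{l\neq i}S_l'$ with $S_i'\subset S_i^*$ but $\prod_{l\neq i}S_l'$ arbitrarily large. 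The route through $\mathcal M^{\hat i}\circ M^{\mathrm{loc}}_{\cS_i}$ cannot be closed by your items (i)--(iii): John--Nirenberg in the $i$-th variable gives only a slice-wise bound $\bigl\|M^{\mathrm{loc}}_{\cS_i}(|g_{\hat z_i}-Pg(\hat z_i)|)\bigr\|_{L^2(S_i^*,dx_i)}\lesssim\|g\|\,|S_i^*|^{1/2}$, uniformly in $\hat z_i$, whereas $\mathcal M^{\hat i}$ is a \emph{global} operator in the remaining variables applied to a function with no integrability or oscillation control at large scales in $\hat z_i$; a supremum over arbitrarily large product shapes of a family of averages that is merely $L^2$-bounded slice by slice need not be finite, and no Fefferman--Stein-type inequality applies. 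Controlling those shapes requires using the BMO structure (and engulfing) in the remaining factors as well, which is precisely what your sketched ``peel off one factor at a time'' induction would have to supply; as written, the core estimate is unproven.

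For comparison, the paper circumvents this by classifying the competing shapes according to the \emph{full} set $I$ of directions in which they escape $\widetilde S$ (the families $\cF_I(S)$), engulfing exactly in the directions of $I$, subtracting the partial mean $(f_y)_{\widetilde S_I}$ over the escape directions, and controlling the local piece via the assumed $L^p(\R^n)$-boundedness of $M_{\cS}$ together with the product estimate $\|f\|_{\BMO{\cS_I}{p}}\leq c_k\|f\|_{\BMO{\cS}{p}}$ from Theorem \ref{otherpaper} (this is where the differentiation hypothesis enters, rather than a factor-wise John--Nirenberg inequality). The price is that shapes escaping in the directions $I$ are only compared with $\essinf_{S_I}(M_{\cS}f)_{y}$, an infimum in the escaping variables, which is why the paper obtains the bound with $\max_{1\leq i\leq k}\essinf_{S_i}(M_{\cS}f)_{\hat z_i}$ --- i.e.\ exactly rectangular $\BLO{}{}$ --- and not your stronger single-$i$ inequality, whose validity the paper does not establish and which your argument does not yet prove. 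Note also a mismatch in hypotheses: the theorem assumes $L^p(\R^n)$-boundedness of $M_{\cS}$ itself and $f\in\BMO{\cS}{p}$ (or $\BMO{\cS}{p}\cong\BMO{\cS}{}$), not factor-wise $L^p$-boundedness of $M_{\cS_i}$ or factor-wise John--Nirenberg, so the auxiliary properties you invoke would alter the statement being proved.
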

In particular, this theorem applies to the basis of rectangles, and so it follows that the strong maximal function $M_s$ takes functions from strong $\BMO{}{}$ to rectangular $\BLO{}{}$. 


\section{{\bf Preliminaries}}


Consider $\R^n$ with the Euclidean topology and Lebesgue measure. We call a shape in $\R^n$ any open set $S$ such that $0<|S|<\infty$. By a basis of shapes we mean a collection $\cS$ of shapes $S$ that forms a cover of $\R^n$. Unless otherwise stated, $1\leq p<\infty$. 

Common examples of bases are the collections of all Euclidean balls, $\cB$, all cubes, $\cQ$, and all rectangles, $\cR$. In one dimension, these three choices degenerate to the collection of all (finite) open intervals, $\cI$. Other examples of bases are the collection of all ellipses and balls coming from $p$-norms on $\R^n$. 

Fix a basis of shapes $\cS$. We assume here and throughout the paper that $f$ is a measurable function satisfying $f\in\Lone(S)$ for all shapes $S\in\cS$. This implies that $f$ is locally integrable. 

\begin{definition}
The maximal function of $f$ with respect to the basis $\cS$ is defined as
$$
M_{\cS}f(x)=\sup_{\cS\ni{S}\ni{x}}\dashint_{S}\!|f|.
$$
\end{definition}

Since shapes are open, it follows that $M_{\cS}f$ is lower semicontinuous, hence measurable.
One shows this in much the same way as one shows the lower semicontinuity of the Hardy-Littlewood maximal function.

An important feature of a basis is the question of the boundedness of the corresponding maximal operator on $\Lp$ for $1<p<\infty$. Indeed, there exist bases for which no such $p$ exists: the basis of all rectangles, not necessarily having sides parallel to the coordinate axes (\cite{duo}). 

In \cite{dg}, the space of functions of bounded mean oscillation with respect to a general basis $\cS$ was introduced: 

\begin{definition}
We say that $f$ belongs to $\BMO{\cS}{p}(\R^n)$ if
$$
\|f\|_{\BMO{\cS}{p}}:=\sup_{S\in\cS}\left(\dashint_{S}\!|f-f_S|^p\right)^{1/p}<\infty.
$$
\end{definition}

The notation $\BMO{\cS}{}(\Rn)$ will be reserved for the case where $p=1$. By Jensen's inequality, $\BMO{\cS}{p}(\R^n)\subset \BMO{\cS}{}(\R^n)$ for any $1<p<\infty$ with $\|f\|_{\BMO{\cS}{}}\leq \|f\|_{\BMO{\cS}{p}}$. If the opposite inclusion holds, that is $\BMO{\cS}{}(\R^n)\subset \BMO{\cS}{p}(\R^n)$ for some $1<p<\infty$ with $\|f\|_{\BMO{\cS}{p}}\leq c\|f\|_{\BMO{\cS}{}}$ for some constant $c>0$, then we write $\BMO{\cS}{p}(\R^n)\cong\BMO{\cS}{}(\R^n)$. This holds, in fact for all $1<p<\infty$, if the John-Nirenberg inequality is valid for every $f\in\BMO{\cS}{}(\R^n)$ with uniform constants (see \cite{dg}). This is the case for the basis $\cQ$, for instance, as well as the basis $\cR$ (\cite{kor}).  

There do exist bases that fail to satisfy $\BMO{\cS}{}(\R^n)\subset\BMO{\cS}{p}(\R^n)$ for any $p$. An example is the basis $\cQ_c$ of cubes centred at the origin with sides parallel to the axes (\cite{ly}). 

Note that the maximal function of an $f$ in $\BMO{\cS}{}(\R^n)$ need not be finite almost everywhere. For example, $M_{\cQ}f\equiv\infty$ if $f(z)=-\log|z|\in\BMO{\cQ}{}(\R^n)$.

Many familiar $\BMO{}{}$ properties were shown in \cite{dg} to hold at this level of generality, even when working with functions defined on a domain in $\R^n$. In particular, $\BMO{\cS}{p}$ is a Banach space modulo constants. Moreover, $\BMO{\cS}{p}$ is a lattice: if $f,g\in\BMO{\cS}{p}$, then $h\in \BMO{\cS}{p}$, where $h$ is either $\max(f,g)$ or $\min(f,g)$. This follows readily from writing $\max(f,g)=\frac{1}{2}(f+g+|f-g|)$ and $\min(f,g)=\frac{1}{2}(f+g-|f-g|)$, because the operator $f\mapsto|f|$ is bounded on $\BMO{\cS}{p}$ and $\BMO{\cS}{p}$ is a linear space. 

An important subset of $\BMO{}{}$ that often arises is the class of functions of bounded lower oscillation. Analogously to what was done in \cite{dg} for $\BMO{}{}$, we define this set with respect to a general basis:

\begin{definition}
We say that $f$ belongs to $\BLO{\cS}{}(\Rn)$ if
$$
\|f\|_{\BLO{\cS}{}}:=\sup_{S\in\cS}\dashint_{S}\![f-\essinf_{S}f]<\infty.
$$
\end{definition}

Note that $\BLO{\cS}{}(\R^n)\subset \BMO{\cS}{}(\R^n)$ because, for any shape $S\in\cS$,
$$
\dashint_{S}\!|f-f_S|\leq 2 \dashint_{S}\!|f-\alpha|
$$
holds for any constant $\alpha$ and so, in particular, for $\alpha=\essinf\limits_{S}f$. Moreover, the inclusion can be strict: the function $f(z)=\log|z|$ is an element of $\BMO{\cQ}{}(\R^n)\setminus \BLO{\cQ}{}(\R^n)$. The function $f(z)=-\log|z|$, however, {\emph {is}} in $\BLO{\cQ}{}(\R^n)$. This example shows that, in general, $\BLO{\cS}{}(\R^n)$ fails to be a linear space. 

As such, the approach used above to argue that $\BMO{\cS}{}(\R^n)$ is a lattice is not immediately applicable to $\BLO{\cS}{}(\R^n)$. The following establishes that $\BLO{\cS}{}(\R^n)$ is an upper semilattice; that is, $\max(f,g)\in \BLO{\cS}{}(\R^n)$ whenever $f,g\in \BLO{\cS}{}(\R^n)$. 

\begin{proposition}\label{blolatt}
For any basis $\cS$, $\BLO{\cS}{}(\R^n)$ is an upper semilattice with 
$$
\|\max(f,g)\|_{\BLO{\cS}{}}\leq\|f\|_{\BLO{\cS}{}}+\|g\|_{\BLO{\cS}{}}.
$$ 
\end{proposition}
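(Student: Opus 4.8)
The plan is to fix an arbitrary shape $S\in\cS$ and estimate the lower oscillation of $h:=\max(f,g)$ over $S$ directly, by comparing $\essinf_S h$ with the essential infima of $f$ and $g$ taken separately. First I would record the elementary preliminary observations: $h$ is measurable and $|h|\leq|f|+|g|$, so $h\in\Lone(S)$ for every $S\in\cS$, making $h$ an admissible function with $h_S$ finite; moreover, since $f,g\in\BLO{\cS}{}(\Rn)$, the quantities $a:=\essinf_S f$ and $b:=\essinf_S g$ are finite (indeed $a\geq f_S-\|f\|_{\BLO{\cS}{}}$ and $a\leq f_S$, and similarly for $b$), and hence so is $\essinf_S h$, which in addition satisfies $\essinf_S h\geq\max(a,b)$ because $h\geq f$ and $h\geq g$ pointwise on $S$.

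The key pointwise inequality is that, almost everywhere on $S$,
$$
h-\essinf_S h\;\leq\;\max(f,g)-\max(a,b)\;\leq\;(f-a)+(g-b),
$$
where the first step uses $\essinf_S h\geq\max(a,b)$ and the second is a short case check: on the subset of $S$ where $h=f$ we have $\max(f,g)-\max(a,b)\leq f-a\leq(f-a)+(g-b)$ since $g-b\geq0$ a.e.\ on $S$, and symmetrically on the subset where $h=g$. Integrating over $S$ and dividing by $|S|$ then gives
$$
\dashint_S\![h-\essinf_S h]\;\leq\;\dashint_S\![f-\essinf_S f]+\dashint_S\![g-\essinf_S g]\;\leq\;\|f\|_{\BLO{\cS}{}}+\|g\|_{\BLO{\cS}{}}.
$$
Taking the supremum over all $S\in\cS$ shows $h\in\BLO{\cS}{}(\Rn)$ with the asserted norm bound.

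There is essentially no serious obstacle here; the only points requiring a moment's care are the finiteness of $\essinf_S h$ (handled via the preliminary observations) and the fact that the bounds $f\geq a$ and $g\geq b$ hold only almost everywhere on $S$, which is precisely the definition of the essential infimum and suffices since we only integrate. The argument uses nothing about $\cS$ beyond its being a basis of shapes, matching the generality of the statement. I would also remark that this proof does not yield a companion inequality for $\min(f,g)$: there $\essinf_S\min(f,g)$ can be strictly smaller than $\min(a,b)$, which is consistent with $\BLO{\cS}{}(\Rn)$ being merely an upper semilattice, as already signalled by the example $\pm\log|z|$.
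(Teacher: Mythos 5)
Your argument is correct and is essentially the paper's proof: both rest on $\essinf_S\max(f,g)\geq\max(\essinf_S f,\essinf_S g)$ together with a case split according to whether $\max(f,g)=f$ or $=g$ (your pointwise bound by $(f-a)+(g-b)$ is just the paper's splitting of the integral over $E=\{f\geq g\}$ and its complement, extended to all of $S$ by nonnegativity of the integrands). The added remarks on finiteness and on why the argument fails for $\min(f,g)$ are fine but not needed.
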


\begin{proof}
Let $f,g\in \BLO{\cS}{}(\R^n)$ and fix a shape $S\in\cS$. Writing $h=\max(f,g)$ and considering the set $E=\{z\in S:f(z)\geq g(z) \}$, we have that
\[
\begin{split}
\int_{S}\![h-\essinf_{S}h] &= \int_{E}\![f-\essinf_{S}h] + \int_{S\setminus E}\![g-\essinf_{S}h]\\ &\leq \int_{E}\![f-\essinf_{S}f] + \int_{S\setminus E}\![g-\essinf_{S}g]\\ &\leq \int_{S}\![f-\essinf_{S}f] + \int_{S}\![g-\essinf_{S}g]\\&\leq |S|\left[ \|f\|_{\BLO{\cS}{}}+\|g\|_{\BLO{\cS}{}}\right].
\end{split}
\]
Dividing by $|S|$ and taking a supremum over $S\in\cS$ yields the result.
\end{proof}



\section{{\bf Engulfing bases}}


In this section, we provide a generalisation of Bennett's theorem that the maximal function is bounded from $\BMO{}{}$ to $\BLO{}{}$. What is essentially the same proof as that of Bennett holds for a class of bases. The key property is that $\cS$ is an engulfing basis. 

\begin{definition}\label{engulf}
We say that $\cS$ is an engulfing basis if there exist constants $c_d,c_e>{1}$, that may depend on the dimension $n$, such that
\begin{enumerate}
\item[(i)] to each $S\in\cS$ we can associate a shape $\widetilde{S}\in\cS$ such that $\widetilde{S}\supset{S}$ and $|\widetilde{S}|\leq c_d|S|$;
\item[(ii)] for each $S\in\cS$, with $\widetilde{S}$ chosen as in (i), if $T\in\cS$ is such that $S\cap T\neq\emptyset$ and $\widetilde{S}^c\cap T\neq\emptyset$, where $\widetilde{S}^c$ denotes the complement of $\widetilde{S}$, then there exists an $\overline{T}\in\cS$ such that $\overline{T}\supset \widetilde{S}\cup T$ and $|\overline{T}|\leq c_e|T|$. 
\end{enumerate}
\end{definition}

Note that the choice of engulfing shape $\overline{T}$ depends on $S$, $T$, and the choice of shape $\widetilde{S}$ to associate to $S$.

An example of an engulfing basis is the family of balls in $\R^n$ with respect to a $p$-metric, $1\leq p \leq\infty$. The bases $\cB$ and $\cQ$ are special cases, with $p=2$ and $p=\infty$, respectively. 

More generally, the basis of balls in any doubling metric measure space is an engulfing basis. Denote by $B(z,r)$ a ball with centre $z$ and radius $r>0$. Every ball $B_1=B(z,r)$ has a natural double $\widetilde{B}_1=B(z,2r)$ satisfying $\widetilde{B}_1\supset B_1$ and $|\widetilde{B}_1|\leq c_d|B_1|$ for some $c_d>1$. In $\R^n$, we have $c_d=2^n$. Furthermore, if $B_2=B(w,R)$ satisfies $B_1\cap B_2\neq\emptyset$ and $\widetilde{B}_1^c\cap B_2\neq\emptyset$, then $R>r/2$ and there is a ball $\overline{B}_2$ centred at a point in $B_1\cap B_2$ of radius $\max(2R,3r)\leq 6r$. This ball satisfies $\overline{B}_2\supset \widetilde{B}_1\cup B_2$ and $|\overline{B}_2|\leq c_e|B_2|$ for some $c_e>1$. In $\R^n$, we have $c_e=6^n$.


An example of a basis which does not satisfy an engulfing property is $\cR$. No matter what choice of $\widetilde{R}$ made in (i), there is no $c_e$ for which condition (ii) holds. To see this, consider the case $n=2$, as well as the rectangles $R_1=[0,1]\times[0,H]$ and $R_2=[0,H]\times[0,1]$ for $H>1$. Notice that $R_1\cap R_2\neq\emptyset$ and suppose that $\widetilde{R}_1^c\cap R_2\neq\emptyset$, where $\widetilde{R}_1$ containing $R_1$ has been chosen as in (i). Then, the smallest rectangle containing $\widetilde{R}_1$ and $R_2$ must contain $[0,H]\times[0,H]$. Thus,
$$
\frac{|\overline{R}_2|}{|R_2|}\geq\frac{H^2}{H}\rightarrow\infty
$$
as $H\rightarrow\infty$, and so there can be no $c_e<\infty$ satisfying condition (ii) uniformly for all rectangles. 


Now we come to the statement of the theorem.

\begin{theorem}\label{benn}
Let $\cS$ be an engulfing basis such that there exists a $p\in(1,\infty)$ for which $M_{\cS}$ is bounded on $L^p(\R^n)$ with norm $A_p$. If $f\in\BMO{\cS}{p}(\R^n)$, then
\begin{equation}\label{bennett}
\dashint_{S}\!M_{\cS}f\leq c\|f\|_{\BMO{\cS}{p}}+\essinf_{S}M_{\cS}f
\end{equation}
for all $S\in\cS$, where $c$ is a constant depending on $p,n,c_d,c_e,$ and $A_p$. Assuming the right-hand side of \eqref{bennett} is finite for every shape $S\in\cS$, it follows that $M_{\cS}f$ is finite almost everywhere and $M_{\cS}f\in\BLO{\cS}{}(\R^n)$ with 
$$
\|M_{\cS}f\|_{\BLO{\cS}{}}\leq c\|f\|_{\BMO{\cS}{p}}.
$$ 

Moreover, if $\BMO{\cS}{p}(\R^n)\cong\BMO{\cS}{}(\R^n)$, then $\|M_{\cS}f\|_{\BLO{\cS}{}}\leq C\|f\|_{\BMO{\cS}{}}$ holds for all $f\in\BMO{\cS}{}(\R^n)$ for which $M_{\cS}f$ is finite almost everywhere.
\end{theorem}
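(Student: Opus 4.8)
The plan is to follow Bennett's original argument, adapted to the engulfing setting, and the main work is establishing the pointwise-type estimate \eqref{bennett}. Fix a shape $S\in\cS$ and its associated engulfing shape $\widetilde{S}$ from Definition \ref{engulf}(i). The natural splitting is to write $f=f_1+f_2$ where $f_1=(f-f_{\widetilde S})\chi_{\widetilde S}$ and $f_2=(f-f_{\widetilde S})\chi_{\widetilde S^c}$, noting that subtracting the constant $f_{\widetilde S}$ does not change $M_{\cS}f$ up to that additive constant (more precisely, one controls $M_{\cS}f - |f_{\widetilde S}|$, or works with the oscillation directly). For the \emph{local} part $f_1$, the $L^p$-boundedness of $M_{\cS}$ with norm $A_p$ gives
\[
\dashint_{S}\!(M_{\cS}f_1)^p \le \frac{1}{|S|}\int_{\R^n}\!(M_{\cS}f_1)^p \le \frac{A_p^p}{|S|}\int_{\widetilde S}\!|f-f_{\widetilde S}|^p \le A_p^p\,c_d\,\|f\|_{\BMO{\cS}{p}}^p,
\]
using $|\widetilde S|\le c_d|S|$; then Jensen's inequality controls $\dashint_S M_{\cS}f_1$ by $c\|f\|_{\BMO{\cS}{p}}$.

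For the \emph{far} part $f_2$, the engulfing property does the work that the geometry of cubes does in Bennett's proof. Fix any two points $x,y\in S$. Given a shape $T\ni x$ contributing to $M_{\cS}f_2(x)$, only the portion of $T$ meeting $\widetilde S^c$ matters, so $T\cap\widetilde S^c\ne\emptyset$; since also $x\in T\cap S$ we may invoke Definition \ref{engulf}(ii) to get $\overline T\in\cS$ with $\overline T\supset\widetilde S\cup T$ and $|\overline T|\le c_e|T|$. Because $\overline T\supset\widetilde S\supset S\ni y$ and $\overline T\supset T\supset T\cap\widetilde S^c$,
\[
\dashint_{T}\!|f_2| = \frac{1}{|T|}\int_{T\cap\widetilde S^c}\!|f-f_{\widetilde S}| \le \frac{c_e}{|\overline T|}\int_{\overline T}\!|f-f_{\widetilde S}| \le \frac{c_e}{|\overline T|}\int_{\overline T}\!|f-f_{\overline T}| + c_e\,|f_{\overline T}-f_{\widetilde S}|,
\]
and both terms are bounded by a constant times $\|f\|_{\BMO{\cS}{p}}$ — the second because $\widetilde S\subset\overline T$ with $|\overline T|\le c_e|\widetilde S|$ (after absorbing $c_d$) makes $|f_{\overline T}-f_{\widetilde S}|\lesssim\|f\|_{\BMO{\cS}{}}$. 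Taking the supremum over such $T\ni x$ gives $M_{\cS}f_2(x)\le c\|f\|_{\BMO{\cS}{p}}$ for every $x\in S$; in particular $\dashint_S M_{\cS}f_2\le c\|f\|_{\BMO{\cS}{p}}$ and also $M_{\cS}f_2(y)\le c\|f\|_{\BMO{\cS}{p}}$. Combining with the subadditivity $M_{\cS}f\le M_{\cS}f_1+M_{\cS}f_2 + |f_{\widetilde S}|$ and comparing with $M_{\cS}f(y)\ge M_{\cS}f_1(y)\ge$ (nothing useful) — here instead one uses the standard trick: $M_{\cS}f(x) \le M_{\cS}f_1(x) + M_{\cS}f_2(x)$ and $\essinf_S M_{\cS}f \ge \essinf_S(M_{\cS}f - M_{\cS}f_1) \ge -\,\esssup_S M_{\cS}f_1$ is too lossy, so one argues as Bennett does: for a.e. $y\in S$, $M_{\cS}f(y) \ge M_{\cS}(f\chi_{\widetilde S^c})(y) = M_{\cS}f_2(y) - O(|f_{\widetilde S}|)$ fails too — the clean route is to note $M_{\cS}f \le M_{\cS}f_1 + C\|f\|_{\BMO{\cS}{p}}$ on $S$ pointwise, hence $\dashint_S M_{\cS}f - \essinf_S M_{\cS}f \le \dashint_S M_{\cS}f_1 + \esssup_S(\text{const}) \le c\|f\|_{\BMO{\cS}{p}}$, which is \eqref{bennett} once we check the additive constant $|f_{\widetilde S}|$ cancels between the average and the essential infimum (it does, being constant on $S$). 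I expect reconciling these additive constants so that they genuinely cancel — i.e. bounding $M_{\cS}f$ on $S$ by $M_{\cS}f_1 + (\text{constant independent of position in }S)$ — to be the one delicate bookkeeping point, exactly as in Bennett's paper.

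Granting \eqref{bennett}: if the right-hand side is finite for every $S\in\cS$, then since $\{S\}$ covers $\R^n$ and each $M_{\cS}f$ is finite a.e. on each such $S$ (the average being finite forces finiteness a.e.), $M_{\cS}f<\infty$ a.e. on $\R^n$. Moreover $M_{\cS}f\in\Lone(S)$ for each $S$, and \eqref{bennett} rearranges to $\dashint_S[M_{\cS}f - \essinf_S M_{\cS}f] \le c\|f\|_{\BMO{\cS}{p}}$ for all $S$, which is exactly the statement $M_{\cS}f\in\BLO{\cS}{}(\R^n)$ with $\|M_{\cS}f\|_{\BLO{\cS}{}}\le c\|f\|_{\BMO{\cS}{p}}$. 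Finally, for the last sentence, suppose $\BMO{\cS}{p}(\R^n)\cong\BMO{\cS}{}(\R^n)$ and let $f\in\BMO{\cS}{}(\R^n)$ with $M_{\cS}f$ finite a.e. Then $f\in\BMO{\cS}{p}(\R^n)$ with $\|f\|_{\BMO{\cS}{p}}\le c\|f\|_{\BMO{\cS}{}}$, so for each $S$ the quantity $c\|f\|_{\BMO{\cS}{p}} + \essinf_S M_{\cS}f$ is finite (the essential infimum is finite because $M_{\cS}f$ is finite a.e.), hence the hypothesis of the second paragraph is met and $\|M_{\cS}f\|_{\BLO{\cS}{}}\le c\|f\|_{\BMO{\cS}{p}}\le C\|f\|_{\BMO{\cS}{}}$, as claimed. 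The only subtlety worth a sentence is why $M_{\cS}f$ finite a.e. gives $\essinf_S M_{\cS}f<\infty$ for each shape $S$: since $|S|>0$ and $M_{\cS}f<\infty$ on a full-measure subset of $S$, the essential infimum over $S$ is certainly finite.
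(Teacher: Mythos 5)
Your handling of the second half of the theorem (finiteness a.e., rearranging \eqref{bennett} into the $\BLO{\cS}{}$ bound, and the ``moreover'' statement) is correct and matches the paper, as does your treatment of the local part $f_1$. The gap is in the far part. You set $f_2=(f-f_{\widetilde S})\chi_{\widetilde S^c}$ and claim $M_{\cS}f_2(x)\le c\|f\|_{\BMO{\cS}{p}}$ for every $x\in S$, the key step being $|f_{\overline T}-f_{\widetilde S}|\lesssim\|f\|_{\BMO{\cS}{}}$, which you justify by asserting $|\overline T|\le c_e|\widetilde S|$. That is a misreading of Definition \ref{engulf}: engulfing gives $|\overline T|\le c_e|T|$, where $T$ is an \emph{arbitrary} shape through $x$ exiting $\widetilde S$, so $\overline T$ can be arbitrarily much larger than $\widetilde S$, and then $f_{\overline T}-f_{\widetilde S}$ can drift like $\log(|\overline T|/|\widetilde S|)$ times the norm. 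Concretely, take $\cS=\cQ$, $f(z)=\log^{+}(1/|z|)$ and $S=(\eps,2\eps)^n$: then $f_{\widetilde S}\approx\log(1/\eps)$, while $f\approx 0$ on most of $\widetilde S^c$, so averaging $|f_2|\approx\log(1/\eps)$ over moderately sized cubes through points of $S$ gives $M_{\cQ}f_2\approx\log(1/\eps)$ on $S$, which is not $O(\|f\|_{\BMO{\cQ}{}})$. For the same reason your ``clean route'' bound $M_{\cS}f\le M_{\cS}f_1+C\|f\|_{\BMO{\cS}{p}}$ on $S$ is false: it would yield $\dashint_S M_{\cS}f\le c\|f\|_{\BMO{\cS}{p}}+|f_{\widetilde S}|$ with no genuine dependence on $\essinf_S M_{\cS}f$, and in the example above $M_{\cQ}f\approx\log(1/\eps)$ on $S$ while the right-hand side of your bound stays of size $\|f\|$ plus $M_{\cS}f_1$, which is small on most of $S$.

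The missing idea---and the reason the theorem contains the term $\essinf_S M_{\cS}f$ at all---is that the far part must be compared with $M_{\cS}f$ itself, not with the $\BMO{}{}$ norm alone. The paper (following Bennett) decomposes $f=g+h$ exactly, with $h=f_{\widetilde S}\chi_{\widetilde S}+f\chi_{\widetilde S^c}$, so there is no leftover additive constant. For $T\subset\widetilde S$ one gets $\dashint_T|h|=|f_{\widetilde S}|\le\dashint_{\widetilde S}|f|\le\essinf_S M_{\cS}f$ (your observation that $|f_{\widetilde S}|\le\essinf_S M_{\cS}f$ is this same point, and it is correct, but it cannot repair the unbounded term $|f_{\overline T}-f_{\widetilde S}|$). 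For $T$ exiting $\widetilde S$, the oscillation estimate $\dashint_T|h-f_{\overline T}|\le c_e\dashint_{\overline T}|f-f_{\overline T}|\le c_e\|f\|_{\BMO{\cS}{p}}$ uses only $|\overline T|\le c_e|T|$ (never a comparison of $|\overline T|$ with $|\widetilde S|$), and then the remaining term is handled by $|f_{\overline T}|\le\dashint_{\overline T}|f|\le M_{\cS}f(z)$ for every $z\in S$, since $\overline T\supset\widetilde S\supset S$; this is exactly how $\essinf_S M_{\cS}f$ enters the pointwise bound for $M_{\cS}h$ on $S$. To fix your argument, replace your $f_2$ by the paper's $h$ (or, equivalently, estimate $\dashint_T|f_2|$ by $c\|f\|+\dashint_{\overline T}|f|+|f_{\widetilde S}|$ and bound the last two terms by values of $M_{\cS}f$ on $S$ rather than by the norm).
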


\begin{remark}
This theorem contains not only that of Bennett, but also the corresponding result of Guzm\'{a}n-Partida (\cite{gp}) for the basis $\cQ_c$. This is an engulfing basis and the boundedness of $M_{\cQ_c}$ on $\Lp$ follows from the fact that $\cQ_c\subset\cQ$ and the boundedness of $M_{\cQ}$ on $\Lp$. \end{remark}

\begin{proof}
Fix $f\in\BMO{\cS}{p}(\R^n)$ and $S\in\cS$. Write $f=g+h$, where $g=(f-f_{\widetilde{S}})\chi_{\widetilde{S}}$ and $h=f_{\widetilde{S}}\chi_{\widetilde{S}}+f\chi_{\widetilde{S}^c}$. Then, by the boundedness of $M_{\cS}$ on $\Lp(\R^n)$, 
$$
\dashint_{S}\!M_{\cS}g\leq  \frac{1}{|S|^{1/p}}\|M_{\cS}g\|_{L^p}\leq  \frac{A_p}{|S|^{1/p}}\|g\|_{L^p}\leq A_pc_d^{1/p}\left(\dashint_{\widetilde{S}}|f-f_{\widetilde{S}}|^p\right)^{1/p}\!\!.
$$
Thus,
\begin{equation}
\label{gben}
\dashint_{S}\!M_{\cS}g \leq A_pc_d^{1/p} \|f\|_{\BMO{\cS}{p}}.
\end{equation}

Fix a point $z_0 \in S$ and a shape $T\in \cS$ such that $T\ni z_0$. If $T\subset \widetilde{S}$, then
$$
\dashint_{T}\!|h|=|f_{\widetilde{S}}|\leq\dashint_{\widetilde{S}}\!|f|\leq M_{\cS}f(z)
$$
for every $z\in \widetilde{S}$. In particular, this is true for every $z\in{S}$, and so 
\begin{equation}\label{small}
\dashint_{T}\!|h|\leq\essinf_{S}M_{\cS}f.
\end{equation}
If $T\cap\widetilde{S}^c\neq\emptyset$, then by the engulfing property there exists a shape $\overline{T}$ containing $T$ and $\widetilde{S}$ such that $|\overline{T}|\leq c_e|T|$. Hence, 
\[
\begin{split}
\dashint_{T}\!|h-f_{\overline{T}}|\leq c_e \dashint_{\overline{T}}\!|h-f_{\overline{T}}|&= \frac{c_e}{|\overline{T}|}\left[|\widetilde{S}||f_{\widetilde{S}}-f_{\overline{T}}|+\int_{\overline{T}\cap\widetilde{S}^c}\!|f-f_{\overline{T}}| \right]\\
&\leq \frac{c_e}{|\overline{T}|}\left[\int_{\widetilde{S}}\!|f-f_{\overline{T}}|+\int_{\overline{T}\cap\widetilde{S}^c}\!|f-f_{\overline{T}}| \right]\\&=c_e\dashint_{\overline{T}}\!|f-f_{\overline{T}}|\leq c_e\left(\dashint_{\overline{T}}\!|f-f_{\overline{T}}|^p\right)^{1/p}\leq c_e\|f\|_{\BMO{\cS}{p}}.
\end{split}
\]
Thus,
$$
\dashint_{T}\!|h|\leq \dashint_{T}\!|h-f_{\overline{T}}|+\dashint_{\overline{T}}\!|f|\leq c_e\|f\|_{\BMO{\cS}{p}} +M_{\cS}f(z)
$$
for every $z\in \overline{T}$. In particular, this is true for every $z\in S$, and so
\begin{equation}\label{large}
\dashint_{T}\!|h|\leq c_e\|f\|_{\BMO{\cS}{p}}+ \essinf_{S}M_{\cS}f.
\end{equation}

Combining \eqref{small} and \eqref{large}, we have the pointwise bound
\begin{equation}
\label{hben}
M_{\cS}h(z_0)\leq c_e\|f\|_{\BMO{\cS}{p}} +\essinf_{S}M_{\cS}f.
\end{equation}

Therefore, combining $\eqref{gben}$ and $\eqref{hben}$, we arrive at
$$
\dashint_{S}\!M_{\cS}f\leq \dashint_{S}\!M_{\cS}g+\dashint_{S}\!M_{\cS}h\leq c\|f\|_{\BMO{\cS}{p}} +\essinf_{S}M_{\cS}f.
$$
\end{proof}


\section{{\bf Product structure}}


In this section, we follow Section 8 of \cite{dg}. Write $\R^n=\R^{n_1}\times\R^{n_2}\times\ldots\times\R^{n_k}$, where $n_1+n_2+\ldots+n_k=n$ and $2\leq{k}\leq{n}$. Let $\cS$ be a basis of shapes in $\R^n$ and, for each $1\leq{i}\leq{k}$, let $\cS_i$ be a basis of shapes in $\R^{n_i}$. For $z\in\R^n$, write $\hat{z}_i$ when the $i$th component (coming from $\R^{n_i}$) has been deleted and define $f_{\hat{z}_i}$ to be the function on $\R^{n_i}$ obtained from $f$ by fixing the other components equal to $\hat{z}_i$.

We can define a $\BMO{}{}$ space on $\R^n$ that measures uniform ``lower-dimensional" bounded mean oscillation with respect to $\cS_i$ in the following way.

\begin{definition}\label{ld}
A function $f\in \Loneloc(\R^n)$ is said to be in $\BMO{\cS_{i}}{p}{(\R^n)}$ if $f_{\hat{z}_i}\in \BMO{\cS_{i}}{p}\!(\R^{n_i})$ uniformly in $\hat{z}_i$; i.e.
$$
\|f\|_{\BMO{\cS_{i}}{p}\!(\R^n)}:=\sup_{\hat{z}_i}\|f_{\hat{z}_i}\|_{\BMO{\cS_{i}}{p}\!(\R^{n_i})}<\infty.
$$
\end{definition}

It turns out that under certain conditions on the relationship between the bases $\{\cS_i\}_{i=1}^{k}$ and the overall basis $\cS$, there is a relationship between $\BMO{\cS_{i}}{}{(\R^n)}$ and $\BMO{\cS}{}{(\R^n)}$. We present the theorem, after a definition, below. 

\begin{definition}
Let $\cS$ be a basis of shapes in $\R^n$ and $\cS_i$ be a basis of shapes for $\R^{n_i}$, $1 \leq i \leq k$, where $n_1+n_2+\ldots+n_k=n$.
\begin{enumerate}
\item We say that $\cS$ satisfies the weak decomposition property with respect to $\{\cS_i\}_{i=1}^{k}$ if for every $S\in\cS$, there exist $S_i\in\cS_i$, $1\leq{i}\leq{k}$, such that $S=S_1\times S_2\times\ldots\times S_k$.

\item If, in addition, for every $\{S_i\}_{i=1}^{k}$, $S_i\in\cS_i$, the set $S_1\times S_2\times\ldots\times S_k\in\cS$, then we say that the basis $\cS$ satisfies the strong decomposition property with respect to $\{\cS_i\}_{i=1}^{k}$.
\end{enumerate}
\end{definition}

Starting with bases $\cS_i$ in $\R^{n_i}$, $1\leq{i}\leq k$, the Cartesian product $\cS_1\times \cS_2\times\ldots\times\cS_k$ is a basis of shapes in $\R^n$ with the strong decomposition property with respect to $\{\cS_i\}_{i=1}^{k}$. 

When $\cS_i=\cR_i$, where $\cR_i$ denotes the basis of rectangles in $\R^{n_i}$, the Cartesian product above coincides with the basis $\cR$ in $\R^n$. As such, $\cR$ satisfies the strong decomposition property with respect to $\{\cR_i\}_{i=1}^{k}$. In particular, when $k=n$ and so $n_i=1$ for every $1\leq{i}\leq{n}$, $\cR$ satisfies the strong decomposition property with respect to $\{\cI_i\}_{i=1}^{n}$.

The basis $\cQ$ does not satisfy the strong decomposition property, however, with respect to $\{\cQ_i\}_{i=1}^{k}$ for any $2\leq{k}\leq{n}$, where $\cQ_i$ denotes the basis of cubes in $\R^{n_i}$, as the product of arbitrary cubes (or intervals) may not necessarily be a cube. Nevertheless, $\cQ$ does satisfy the weak decomposition property with respect to $\{\cQ_i\}_{i=1}^{k}$. 

\begin{theorem}[\cite{dg}]
\label{otherpaper}
Let $\cS$ be a basis of shapes in $\R^n$ and $\cS_i$ be a basis of shapes for $\R^{n_i}$, $1 \leq i \leq k$, where $n_1+n_2+\ldots+n_k=n$.
\begin{enumerate}
\item[a)] Let $f\in \bigcap_{i=1}^{k}{\BMO{\cS_{i}}{p}}(\R^n)$. If $\cS$ satisfies the weak decomposition property with respect to $\{\cS_i\}_{i=1}^{k}$, then $f\in\BMO{{\cS}}{p}{(\R^n)}$ with 
$$
\|f\|_{\BMO{\cS}{p}\!(\R^n)}\leq \sum_{i=1}^{k}\|f\|_{\BMO{\cS_{i}}{p}\!(\R^n)}.
$$
\item[b)] Let $f\in\BMO{{\cS}}{p}{(\R^n)}$. If $\cS$ satisfies the strong decomposition property with respect to $\{\cS_i\}_{i=1}^{k}$ and each $\cS_i$ contains a differentiation basis that differentiates $\Loneloc(\R^{n_i})$, then $f\in\bigcap_{i=1}^{k}{\BMO{\cS_{i}}{p}}(\R^n)$ with 
$$
\max_{1\leq i \leq k}\{\|f\|_{\BMO{\cS_{i}}{p}\!(\R^n)}\}\leq 2^{k-1}\|f\|_{\BMO{\cS}{p}\!(\R^n)}.
$$
When $p = 2$, the constant $2^{k-1}$ can be replaced by $1$.  
\end{enumerate}
\end{theorem}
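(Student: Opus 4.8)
The plan is to prove part (a) by a telescoping decomposition in the $k$ slots, and part (b) by first establishing the two-slot statement and then inducting on $k$. For part (a), fix $S\in\cS$ and use the weak decomposition property to write $S=S_1\times\cdots\times S_k$ with $S_i\in\cS_i$. For $0\le j\le k$ let $g_j$ be the function obtained from $f$ by replacing its first $j$ variables by their averages over $S_1,\dots,S_j$, so that $g_0=f$, $g_k=f_S$ is constant, and $f-f_S=\sum_{j=1}^{k}(g_{j-1}-g_j)$. Minkowski's inequality in $L^p$ of the normalised measure on $S$ gives
$$\Big(\dashint_S|f-f_S|^p\Big)^{1/p}\le\sum_{j=1}^{k}\Big(\dashint_S|g_{j-1}-g_j|^p\Big)^{1/p}.$$
For fixed values of the slots other than the $j$th, the map $z_j\mapsto g_{j-1}(z)$ is an average over $S_1\times\cdots\times S_{j-1}$ of slice functions of $f$ in the $j$th variable, each lying in $\BMO{\cS_j}{p}(\R^{n_j})$ with seminorm at most $\|f\|_{\BMO{\cS_j}{p}(\R^n)}$; since taking averages does not increase the $\BMO{\cS_j}{p}$-seminorm (Minkowski's integral inequality) and $g_j$ is precisely the average of $g_{j-1}$ over the $j$th slot $S_j\in\cS_j$, Fubini's theorem yields $\dashint_S|g_{j-1}-g_j|^p\le\|f\|_{\BMO{\cS_j}{p}(\R^n)}^p$. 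Summing and taking the supremum over $S\in\cS$ gives part (a).

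The core of part (b) is the case $k=2$; consider the slice in the first variable. Fix $T\in\cS_1$ and put $h(z_2)=\dashint_T\big|f(z_1,z_2)-\dashint_T f(w_1,z_2)\,dw_1\big|^p\,dz_1$. For any $T'\in\cS_2$ the strong decomposition property gives $T\times T'\in\cS$; applying the elementary bound $(\dashint_T|u-u_T|^p)^{1/p}\le 2(\dashint_T|u-c|^p)^{1/p}$, valid for every constant $c$ (with $2$ improved to $1$ when $p=2$, since the mean minimises the $L^2$ distance to constants), with $c=f_{T\times T'}$, and then averaging in $z_2$ over $T'$, one obtains
$$\dashint_{T'}h\le 2^p\dashint_{T\times T'}\!|f-f_{T\times T'}|^p\le 2^p\,\|f\|_{\BMO{\cS}{p}}^p.$$
In particular $h\in\Loneloc(\R^{n_2})$, so letting $T'$ run through a differentiation basis contained in $\cS_2$ and contracting to $z_2$, the Lebesgue differentiation theorem gives $h(z_2)\le 2^p\|f\|_{\BMO{\cS}{p}}^p$ for a.e. $z_2$, i.e. $\|f(\cdot,z_2)\|_{\BMO{\cS_1}{p}(\R^{n_1})}\le 2\|f\|_{\BMO{\cS}{p}}$ for a.e. $z_2$ (with constant $1$ when $p=2$). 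The slice in the second variable is handled symmetrically, using that $\cS_2$ also contains a differentiation basis.

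The general case of part (b) follows by induction on $k$, the case $k=2$ being the statement just established. To control the slice of $f$ in the $i$th variable, choose any $j\neq i$ and write $\R^n=\big(\prod_{l\neq j}\R^{n_l}\big)\times\R^{n_j}$; then $\cS$ has the strong decomposition property with respect to the pair $\{\cS',\cS_j\}$, where $\cS'=\prod_{l\neq j}\cS_l$, and $\cS_j$ contains a differentiation basis, so the $k=2$ case shows that for a.e. value of the $j$th variable the corresponding slice of $f$ lies in $\BMO{\cS'}{p}$ with seminorm at most $2\|f\|_{\BMO{\cS}{p}}$. Since $\cS'$ has the strong decomposition property with respect to the $k-1$ bases $\{\cS_l\}_{l\neq j}$, each still containing a differentiation basis, the inductive hypothesis bounds the $i$th slice of this function by $2^{k-2}$ times its $\BMO{\cS'}{p}$-seminorm; composing the two full-measure conditions with Fubini gives the factor $2^{k-1}$ (and $1$ when $p=2$). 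The point to emphasise is that one differentiates only one factor at a time: a product of differentiation bases need not differentiate $\Loneloc$, so grouping all the remaining variables and differentiating them simultaneously is not available, which is why the iteration, rather than a single two-factor split, is needed.

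The step I expect to demand the most care is the uniformity in the shape $T$ at the end of the two-slot argument: for each fixed $T\in\cS_1$ the bound on $h$ holds off a null set that a priori depends on $T$, and $\cS_1$ need not be countable, so one must argue that these null sets can be taken uniform before passing to the supremum defining $\|f(\cdot,z_2)\|_{\BMO{\cS_1}{p}}$. This is handled, following \cite{dg}, by reducing that supremum to a countable subfamily of shapes, using the continuity of $T\mapsto\dashint_T|g-g_T|^p$ under small variations of $T$ together with separability; by contrast, the telescoping in part (a) involves no such subtlety.
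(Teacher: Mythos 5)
Your argument is correct and is essentially the proof of this result from \cite{dg} (mirrored in this paper by the proof of the $\BLO{}{}$ analogue, Theorem \ref{thispaper}): part (a) by inserting intermediate partial averages — you telescope all $k$ slots at once with Minkowski's inequality, where the paper proves $k=2$ and then inducts via the projected basis — and part (b) by fixing a shape in one factor, bounding the averaged slice oscillation by the full oscillation via the nearest-constant trick (factor $2$, or $1$ for $p=2$ since the mean minimises the $L^2$ distance), applying the differentiation basis contained in the complementary factor, and peeling off one factor at a time exactly as in the $k>2$ iteration here. Your explicit flagging of the null-set/uniformity issue in the supremum over shapes is, if anything, more careful than the published treatment, which passes over that interchange of quantifiers.
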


\begin{remark}
The condition that a basis $\cS$ contains a differentiation basis that differentiates $\Loneloc(\R^{n})$ implies that for any $ f\in \Loneloc(\R^{n})$ and $\eps>0$, for almost every $z$ there exists a shape $S\in\cS$ such that $S\ni z$ and 
$$
\left| \dashint_{S}\!f-f(z) \right|<\eps.
$$
The bases of $\cB$ and $\cQ$ are examples of differentiation bases that differentiate $\Loneloc(\R^{n})$. The basis $\cR$ does not differentiate $\Loneloc(\R^{n})$, but it contains $\cQ$ and so $\cR$ also satisfies the assumptions of this theorem.  
\end{remark}

Just as there are ``lower-dimensional" $\BMO{}{}$ spaces, one may define ``lower-dimensional" $\BLO{}{}$ spaces in an analogous manner.

\begin{definition}
A function $f\in \Loneloc(\R^n)$ is said to be in $\BLO{\cS_i}{}(\R^n)$ if 
$$
\|f\|_{\BLO{\cS_{i}}{}\!(\R^n)}:=\sup_{\hat{z}_i}\|f_{\hat{z}_i}\|_{\BLO{\cS_{i}}{}\!(\R^{n_i})}<\infty.
$$
\end{definition}

It turns out that a $\BLO{}{}$-version of Theorem \ref{otherpaper} is true. The proof follows the same lines as that of Theorem \ref{otherpaper} given in \cite{dg}, but we include it here to illustrate how the nature of $\BLO{}{}$ allows us to attain a better constant in part (b). 

\begin{theorem}
\label{thispaper}
Let $\cS$ be a basis of shapes in $\R^n$ and $\cS_i$ be a basis of shapes for $\R^{n_i}$, $1 \leq i \leq k$, where $n_1+n_2+\ldots+n_k=n$.
\begin{enumerate}
\item[a)] Let $f\in \bigcap_{i=1}^{k}{\BLO{\cS_{i}}{}}(\R^n)$. If $\cS$ satisfies the weak decomposition property with respect to $\{\cS_i\}_{i=1}^{k}$, then $f\in\BLO{{\cS}}{}{(\R^n)}$ with 
$$
\|f\|_{\BLO{\cS}{}\!(\R^n)}\leq \sum_{i=1}^{k}\|f\|_{\BLO{\cS_{i}}{}\!(\R^n)}.
$$
\item[b)] Let $f\in\BLO{{\cS}}{}{(\R^n)}$. If $\cS$ satisfies the strong decomposition property with respect to $\{\cS_i\}_{i=1}^{k}$ and each $\cS_i$ contains a differentiation basis that differentiates $\Loneloc(\R^{n_i})$, then $f\in\bigcap_{i=1}^{k}{\BLO{\cS_{i}}{}}(\R^n)$ with 
$$
\max_{1\leq i \leq k}\{\|f\|_{\BLO{\cS_{i}}{}\!(\R^n)}\}\leq \|f\|_{\BLO{\cS}{}\!(\R^n)}.
$$
\end{enumerate}
\end{theorem}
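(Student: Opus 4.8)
The plan is to follow the proof of Theorem~\ref{otherpaper} from \cite{dg}, replacing means by essential infima throughout; the one point where this substitution actually helps is that the essential infimum is monotone under restriction and iterates over products, $\essinf_{A\times B}g=\essinf_{a\in A}\big(\essinf_{b\in B}g\big)$, which is what accounts for the improved constant in part (b).

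For part (a), fix $f\in\bigcap_{i=1}^{k}\BLO{\cS_i}{}(\R^n)$ and $S\in\cS$, and use the weak decomposition property to write $S=S_1\times\cdots\times S_k$ with $S_i\in\cS_i$. Introduce the partial essential infima $\mu_j:=\essinf_{(z_1,\dots,z_j)\in S_1\times\cdots\times S_j}f$, viewed as a function of the remaining variables $(z_{j+1},\dots,z_k)$, so that $\mu_0=f$, $\mu_k=\essinf_S f$, and $\mu_j=\essinf_{z_j\in S_j}\mu_{j-1}$; telescoping yields $f-\essinf_S f=\sum_{j=1}^{k}(\mu_{j-1}-\mu_j)$, a sum of nonnegative functions. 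Integrating the $j$th term over $S$ and applying Fubini reduces the whole estimate to the bound $\dashint_{S_j}\big[\mu_{j-1}-\essinf_{S_j}\mu_{j-1}\big]\le\|f\|_{\BLO{\cS_j}{}(\R^n)}$, pointwise in the remaining variables. The difficulty here is that $\mu_{j-1}$ is itself an essential infimum, so its average over $S_j$ cannot be compared to $\mu_j$ directly; the device is a near-optimal slice. Writing $m:=\mu_j$, for every $\eps>0$ the set $\{f<m+\eps\}$ has positive measure in $S_1\times\cdots\times S_j$, so by Fubini a positive-measure family of slices $w=(z_1^0,\dots,z_{j-1}^0)\in S_1\times\cdots\times S_{j-1}$ satisfies $\essinf_{z_j\in S_j}f(w,z_j,\dots)<m+\eps$; since for a.e.\ slice one also has $\dashint_{S_j}f(w,z_j,\dots)\,dz_j\le\essinf_{z_j\in S_j}f(w,z_j,\dots)+\|f\|_{\BLO{\cS_j}{}(\R^n)}$ and $\mu_{j-1}\le f(w,\cdot,\dots)$ a.e., we may choose a slice with all three properties, whence $\dashint_{S_j}\mu_{j-1}\le m+\eps+\|f\|_{\BLO{\cS_j}{}(\R^n)}$. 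Letting $\eps\to0$, summing over $j$, and taking the supremum over $S\in\cS$ gives part (a).

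For part (b), fix $i$ and, after relabelling, assume $i=1$; fix a shape $S_1\in\cS_1$, and for $j\ge2$ let $\mathcal D_j\subseteq\cS_j$ be a differentiation basis for $\Loneloc(\R^{n_j})$. For any product shape $Q=S_2\times\cdots\times S_k$ with $S_j\in\mathcal D_j$, the strong decomposition property puts $S_1\times Q$ in $\cS$, so $\dashint_{S_1\times Q}f\le\essinf_{S_1\times Q}f+\|f\|_{\BLO{\cS}{}(\R^n)}$. Setting $F(\hat z_1):=(f_{\hat z_1})_{S_1}$, the left-hand side equals $\dashint_Q F$, while applying Fubini to the inequality $f\ge\essinf_{S_1\times Q}f$ (which holds a.e.\ on $S_1\times Q$) gives $\essinf_{S_1}f_{\hat z_1}\ge\essinf_{S_1\times Q}f$ for a.e.\ $\hat z_1\in Q$; hence $\dashint_Q F\le\essinf_{S_1}f_{\hat z_1}+\|f\|_{\BLO{\cS}{}(\R^n)}$ for a.e.\ $\hat z_1\in Q$. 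Now let $Q$ shrink to $\hat z_1$: since $F\in\Loneloc$ and each $\mathcal D_j$ differentiates $\Loneloc(\R^{n_j})$, the average $\dashint_Q F$ converges to $F(\hat z_1)$ for a.e.\ $\hat z_1$, where — because the product basis on $\R^{n_2}\times\cdots\times\R^{n_k}$ need not differentiate $\Loneloc$ — the limit must be carried out one coordinate at a time, exactly as in \cite{dg}. This yields $\dashint_{S_1}[f_{\hat z_1}-\essinf_{S_1}f_{\hat z_1}]\le\|f\|_{\BLO{\cS}{}(\R^n)}$ for a.e.\ $\hat z_1$; taking the supremum over $S_1$ — passing first, as in \cite{dg}, to a countable cofinal subfamily of $\cS_1$ so the exceptional null sets persist — and then over $\hat z_1$ gives $\|f\|_{\BLO{\cS_1}{}(\R^n)}\le\|f\|_{\BLO{\cS}{}(\R^n)}$. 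The constant is $1$ rather than $2^{k-1}$ as in Theorem~\ref{otherpaper}(b) precisely because the step $\essinf_{S_1}f_{\hat z_1}\ge\essinf_{S_1\times Q}f$ loses nothing, whereas recentring a mean onto the correct constant forces triangle-inequality factors.

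The two points I expect to be the real work are exactly those flagged above: in (a), the failure of the essential infimum to commute with averaging, handled by the slice selection; and in (b), the passage to the limit for a function on the product space $\R^{n_2}\times\cdots\times\R^{n_k}$, whose product differentiation basis fails to differentiate $\Loneloc$, handled by iterating single-coordinate differentiation and tracking null sets (and reducing the supremum over $\cS_1$ to a countable subfamily). Both already occur in \cite{dg} for the $\BMO{}{}$ statement, so the adaptation should be essentially mechanical once the right decompositions are in place.
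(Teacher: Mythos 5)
Part (a) of your proposal is essentially the paper's own argument, reorganized: the paper proves the case $k=2$ by splitting $f-\essinf_S f$ into $[f-\essinf_{S_1}f_y]+[\essinf_{S_1}f_y-\essinf_S f]$ and estimating the second piece at a near-minimum point $(x_0,y_0)$ chosen from the intersection of a positive-measure set and two full-measure sets, then handles $k>2$ by induction through the projection basis $\cS_X$; your telescoping through the partial infima $\mu_j$ does all $k$ factors at once, with your near-optimal slice $w$ playing exactly the role of $(x_0,y_0)$. That part is correct (modulo the measurability of $\hat z\mapsto\essinf_{S}f_{\hat z}$, which the paper also takes for granted).

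Part (b) is where there is a genuine gap. Your central inequality $\dashint_Q F\le\essinf_{S_1}f_{\hat z_1}+\|f\|_{\BLO{\cS}{}}$ holds only for a.e.\ $\hat z_1\in Q$, with the exceptional null set depending on $Q$; you then let $Q$ shrink to the very point $\hat z_1$, so $Q$ is chosen after $\hat z_1$ and the quantifiers are circular. The paper sidesteps this by never leaving the essential infimum pointwise on the right-hand side: it defines the nonnegative function $g(y)=\dashint_{S_1}[f_y-\essinf_{S_1}f_y]\,dx$ and integrates the a.e.\ inequality $\essinf_{S_1}f_y\ge\essinf_S f$, so that $\dashint_{S_2}g\le\|f\|_{\BLO{\cS}{}}$ holds for \emph{every} $S_2$ with no exceptional set, and the only a.e.\ statement comes from a single application of the differentiation basis to $g$. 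The second, and larger, problem is your limiting step: $\dashint_Q F\to F(\hat z_1)$ is taken along the product basis in the remaining $k-1$ factors, which, as you note, need not differentiate $\Loneloc$. The remedy you invoke (``one coordinate at a time, exactly as in \cite{dg}'') is not what the paper or \cite{dg} does, and carried out literally it forces you to manage nested exceptional sets over the uncountable families of shapes in the factors not yet shrunk; likewise, a ``countable cofinal subfamily'' of an abstract basis $\cS_1$ need not exist, and the paper never needs one. What the paper actually does for $k>2$ is iterate the two-factor argument: peel off the single factor $\R^{n_k}$, run the $k=2$ proof with $X=\R^{n_1}\times\cdots\times\R^{n_{k-1}}$ and the projection basis $\cS_X$ (which inherits the strong decomposition property), obtaining $\|f_{y_0}\|_{\BLO{\cS_X}{}}\le\|f\|_{\BLO{\cS}{}}$ for a.e.\ $y_0$, and then recurse on the restricted function $f_{y_0}$; at every stage the differentiation is in a single factor and is applied to a freshly defined oscillation function. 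So the adaptation is not mechanical within your set-up: either adopt the paper's peeling structure, or replace $F$ by the oscillation $g=F-\essinf_{S_1}f_{(\cdot)}$ and still supply an argument for passing from averages over product shapes to a pointwise a.e.\ bound, which is precisely the point at issue.
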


\begin{proof}
We begin by proving the case $k=2$, where $\R^{n}=\R^{n_1}\times\R^{n_2}$ for $n_1+n_2=n$. Write $\cS_x$ for the basis in $\R^{n_1}$ and $x$ for points in $\R^{n_1}$; write $\cS_y$ for the basis in $\R^{n_2}$ and $y$ for points in $\R^{n_2}$.

To prove (a), assume that $\cS$ satisfies the weak decomposition property with respect to $\{\cS_x,\cS_y\}$ and let $f\in\BLO{\cS_x}{}(\R^n)\cap\BLO{\cS_y}{}(\R^n)$. Fixing a shape $S\in\cS$, write $S=S_1\times S_2$ where $S_1\in\cS_x$ and $S_2\in\cS_y$. Then,
$$
\dashint_{S_2}\dashint_{S_1}\![f(x,y)-\essinf_{S}f]\,{d}x\,{d}y=\dashint_{S_2}\dashint_{S_1}\![f(x,y)-\essinf_{S_1}f_{y}]\,{d}x\,{d}y+\dashint_{S_2}\![\essinf_{S_1}f_{y}-\essinf_{S}f]\,{d}y.
$$
For the first integral, we estimate
$$
\dashint_{S_2}\dashint_{S_1}\![f(x,y)-\essinf_{S_1}f_{y}]\,{d}x\,{d}y\leq \dashint_{S_2}\!\norm{f_y}{\BLO{\cS_x}{}\!(\R^{n_1})}\,{d}y\leq \norm{f}{\BLO{\cS_x}{}\!(\R^n)}.
$$
For the second integral, fixing $\eps>0$, the set $E$ of $(x,y)\in S_1\times S_2$ with $\essinf\limits_{S}f>f(x,y)-\eps$ has positive measure. Moreover, the set $F$ of $ (x,y)\in S_1\times S_2$ such that $f(x,y)\geq\essinf\limits_{S_1} f_{y}$ and $f(x,y)\geq\essinf\limits_{S_2} f_{x}$ has full measure, and so $|E\cap F|>0$. Then, taking a point $(x_0,y_0)\in E\cap F$,
\[
\begin{split}
\dashint_{S_2}\![\essinf_{S_1}f_{y}-\essinf_{S}f]\,{d}y&\leq \dashint_{S_2}\![f_{y}(x_0)-f(x_0,y_0)+\eps]\,{d}y\\&= \dashint_{S_2}\![f_{x_0}(y)-f(x_0,y_0)]\,{d}y+\eps\\&\leq \dashint_{S_2}\![f_{x_0}(y)-\essinf_{S_2}f_{x_0}]\,{d}y+\eps\\&\leq \|f_{x_0}\|_{\BLO{\cS_y}{}\!(\R^{n_2})}+\eps\leq \|f\|_{\BLO{\cS_y}{}\!(\R^n)}+\eps .
\end{split}
\]
Therefore, letting $\eps\rightarrow{0^+}$, we conclude that $f\in\BLO{{\cS}}{}(\R^n)$ with 
$$
\|f\|_{\BLO{\cS}{}(\R^n)}\leq \|f\|_{\BLO{\cS_x}{}\!(\R^n)}+\|f\|_{\BLO{\cS_y}{}(\R^n)}.
$$

We now come to the proof of (b). Assume that $\cS$ satisfies the strong decomposition property with respect to $\{\cS_x,\cS_y\}$, and that $\cS_x$ and $\cS_y$ each contain a differentiation basis that differentiates $\Loneloc(\R^{n_1})$ and $\Loneloc(\R^{n_2})$, respectively. Let $f\in\BLO{{\cS}}{}(\R^n)$ and fix a shape $S_1\in\cS_x$. Consider 
$$
g(y)=\dashint_{S_1}\![f_y(x)-\essinf_{S_1}f_y]\,{d}x
$$
as a function of $y$. For any $S_2\in\cS_y$, writing $S=S_1\times S_2$, we have $\essinf\limits_{S_1}f_y\geq\essinf\limits_{S}f$ for almost every $y$, and so
$$
\int_{S_2}\!g(y)\,dy \leq |S_2|\dashint_{S}\![f-\essinf_{S}f]\leq |S_2|\|f\|_{\BLO{\cS}{}(\R^n)},
$$
implying that $g\in \Loneloc(\R^{n_2})$. Let $\eps>0$. Since $\cS_y$ contains a differentiation basis, for almost every $y_0\in \R^{n_2}$ there exists a shape $S_2\in\cS_y$ containing $y_0$ such that
$$
\left|\dashint_{S_2}\!g(y)\,dy-g(y_0) \right|<\eps.
$$
Fix such a $y_0$ and an $S_2$, and write $S=S_1\times S_2$. We have that 
\[
\begin{split}
 \dashint_{S_1}\![f_{y_0}(x)-\essinf_{S_1}f_{y_0}]\,{d}x=g(y_0)\leq \eps + \dashint_{S_2}\!g(y)\,dy \leq \eps + \|f\|_{\BLO{\cS}{}(\R^n)}.
\end{split}
\]
Taking $\eps\rightarrow{0^+}$, since $S_1$ is arbitrary this implies that $f_{y_0}\in\BLO{\cS_x}{}(\R^{n_1})$ with 
$$
\|f_{y_0}\|_{\BLO{\cS_x}{}\!(\R^{n_1})}\leq \|f\|_{\BLO{\cS}{}(\R^n)}.
$$
The fact that this is true for almost every $y_0$ implies that $\|f\|_{\BLO{\cS_x}{}\!(\R^n)}\leq \|f\|_{\BLO{\cS}{}(\R^n)}$. Similarly, one can show that $\|f\|_{\BLO{\cS_y}{}(\R^n)}\leq \|f\|_{\BLO{\cS}{}(\R^n)}$. Thus we have that $f\in\BLO{{\cS_x}}{}(\R^n)\cap\BLO{{\cS_y}}{}(\R^n)$ with 
$$
\max\{\|f\|_{\BLO{\cS_x}{}(\R^n)},\|f\|_{\BLO{\cS_y}{}(\R^n)}\}\leq \|f\|_{\BLO{\cS}{}(\R^n)}.
$$

To prove part (a) for $k>2$ factors, we assume it holds for $k-1$ factors. Write $X=\R^{n_1} \times \R^{n_2} \times \ldots \times \R^{n_{k-1}}$, $Y = \R^{n_k}$, and set $\cS_Y = \cS_k$. Write $x$ for the elements of $\R^{n_1} \times \R^{n_2} \times \ldots \times \R^{n_{k-1}}$ and $y$ for the elements of $\R^{n_k}$. Denote by $\hat{x}_i$ the result of deleting $x_i$ from $x\in\R^{n_1}\times\R^{n_2}\times\ldots\times\R^{n_{k-1}}$.

Assume that $\cS$ has the weak decomposition property with respect to $\{\cS_i\}_{i=1}^{k-1}$. As such, we can define the projection of the basis $\cS$ onto $X$, namely
\begin{equation}
\label{cSx}
\cS_X = \{S_1 \times S_2 \times \ldots \times S_{k-1}: S_i \in \cS_i, \exists S_k \in  \cS_k, \prod_{i = 1}^k S_i \in \cS\}.
\end{equation}
This is a basis of shapes on $X$ which, by definition, has the weak decomposition property with respect to $\{\cS_i\}_{i=1}^{k-1}$. Moreover, $\cS$ has the weak decomposition property with respect to $\{\cS_X,\cS_Y\}$. Beginning by applying the proven case of $k = 2$, we have
$$
\|f\|_{\BLO{\cS}{}(\R^n)}\leq \|f\|_{\BLO{\cS_X}{}\!(\R^n)}+\|f\|_{\BLO{\cS_Y}{}\!(\R^n)}.
$$
Then, we apply the case of $k-1$ factors to $X$ to yield
\[
\begin{split}
\|f\|_{\BLO{\cS_X}{}\!(\R^n)}&= \sup_{y\in Y}\|f_y\|_{\BLO{\cS_{X}}{}\!(X)}\leq \sup_{y\in Y}\sum_{i=1}^{k-1}\|f_y\|_{\BLO{\cS_{i}}{}\!(X)} = \sup_{y\in Y}\sum_{i=1}^{k-1}\sup_{\hat{x}_i}\|({f_y})_{\hat{x}_i}\|_{\BLO{\cS_i}{}\!(\R^{n_i})}\\&\leq \sum_{i=1}^{k-1}\sup_{(\hat{x}_i,y)}\|f_{(\hat{x}_i,y)}\|_{\BLO{\cS_i}{}\!(\R^{n_i})}=\sum_{i=1}^{k-1}\|f\|_{\BLO{\cS_{i}}{}\!(\R^n)}.
\end{split}
\]
Therefore,
$$
\|f\|_{\BLO{\cS}{}(\R^n)}\leq\sum_{i=1}^{k-1}\|f\|_{\BLO{\cS_{i}}{}\!(\R^n)}+\|f\|_{\BLO{\cS_k}{}\!(\R^n)}=\sum_{i=1}^{k}\|f\|_{\BLO{\cS_{i}}{}\!(\R^n)}.
$$

To prove part (b) for $k > 2$ factors, first note that if $\cS$ has the strong decomposition property, then so does $\cS_X$ defined by \eqref{cSx}. We repeat the first part of the proof of (b) for the case $k = 2$ above to reach
$$
\norm{f_{y_0}}{\BLO{\cS_X }{}\!(X)}\leq \norm{f}{\BLO{\cS}{}(\R^n)}
$$
for some $y_0 \in \R^{n_k}$. Now we repeat the process for the function $f_{y_0}$ instead of $f$, with $X_1 = \R^{n_1} \times \R^{n_2} \times \ldots \times \R^{n_{k-2}}$ and $Y_1 = \R^{n_{k-1}}$. This gives
$$
\|(f_{y_0})_{y_1}\|_{\BLO{\cS_{X_1}}{}\!(X_1) }\leq \|f_{y_0}\|_{\BLO{\cS_X}{}\!(X)} \leq \|f\|_{\BLO{ \cS}{}(\R^n)} \quad \forall y_1 \in \R^{n_{k-1}}, y_0 \in \R^{n_k}.
$$ 
We continue until we get to $X_{k-1} = \R^{n_1}$, for which $\cS_{X_k} = \cS_1$, yielding the estimate
$$
\|f_{({y_{k-2}, \ldots, y_0)}}\|_{\BLO{\cS_1}{}\!(\R^{n_1})} \leq \ldots \leq \|f_{y_0}\|_{\BLO{\cS_X}{}\!(X)}
\leq \|f\|_{\BLO{\cS}{}(\R^{n})}
$$
for all $(k-1)$-tuples $y = (y_{k-2}, \ldots, y_0) \in \R^{n_2} \times \ldots \times \R^{n_k}$. Taking the supremum over all such $y$, we have that $f \in \BLO{\cS_{1}}{}(\R^n)$ with
$$
\|f\|_{\BLO{\cS_1}{}\!(\R^n)}=\sup_{y}\|f_y\|_{\BLO{\cS_1}{}\!(\R^{n_1})} \leq \|f\|_{\BLO{\cS}{}(\R^n)}.
$$
A similar process for $i = 2, \ldots, k$ shows that $f \in \BLO{\cS_{i}}{}(\R^n)$ with
$$
\|f\|_{\BLO{\cS_i}{}\!(\R^n)} \leq \|f\|_{\BLO{\cS}{}(\R^{n})}.
$$
\end{proof}


\section{{\bf Rectangular bounded mean oscillation}}

Let $\cS$ be a basis of shapes in $\R^n$ and denote by $\cS_x,\cS_y$ bases of shapes in $\R^{n_1}$ and $\R^{n_2}$, respectively, where $n_1+n_2=n$. Additionally, we maintain the convention that $\cS$ has the strong decomposition property with respect to $\{\cS_x,\cS_y\}$. Writing $x$ for the coordinates in $\R^{n_1}$ and $y$ for those in $\R^{n_2}$, denote by $f_x$ the function obtained from $f$ by fixing $x$. Similarly, $f_y$ is the function obtained from $f$ by fixing $y$. 

We begin by defining the rectangular $\BMO{}{}$ space at this level of generality. 

\begin{definition}
We say that $f$ is in $\BMO{\text{rec},\cS}{}(\R^{n_1}\times\R^{n_2})$ if
\begin{equation}
\|f\|_{\BMO{\text{rec},\cS}{}}:=\sup_{S_1\in \cS_x,S_2\in\cS_y}\dashint_{S_1}\dashint_{S_2}\!|f(x,y)-(f_{x})_{S_2}-(f_{y})_{S_1} +f_{S}|\,{d}y\,{d}x<\infty,
\end{equation}
where $S=S_1\times S_2$.
\end{definition}

In the literature, the classical rectangular $\BMO{}{}$ space corresponds to $\cS_x=\cQ_x$ and $\cS_y=\cQ_y$, and so $\cS$ is the subfamily of $\cR$ that can be written as the product of two cubes. In dimension two, this is the same as $\cR$. 

\begin{proposition}\label{int}
If $f\in \BMO{\cS_x}{}(\R^n)\cup \BMO{\cS_y}{}(\R^n)$, then $f\in \BMO{\text{rec},\cS}{}(\R^{n_1}\times\R^{n_2})$ with 
$$
\|f\|_{\BMO{\text{rec},\cS}{}}\leq 2\min(\|f\|_{\BMO{\cS_x}{}},\|f\|_{\BMO{\cS_y}{}} ).
$$
\end{proposition}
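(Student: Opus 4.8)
The plan is to prove the two one-sided estimates
$$
\|f\|_{\BMO{\text{rec},\cS}{}}\le 2\|f\|_{\BMO{\cS_x}{}}
\qquad\text{and}\qquad
\|f\|_{\BMO{\text{rec},\cS}{}}\le 2\|f\|_{\BMO{\cS_y}{}}
$$
separately; since $f$ lies in at least one of $\BMO{\cS_x}{}(\R^n)$ and $\BMO{\cS_y}{}(\R^n)$, one of the right-hand sides is finite, which gives both $f\in\BMO{\text{rec},\cS}{}(\R^{n_1}\times\R^{n_2})$ and, upon taking the smaller of the two bounds, the claimed inequality. For $S_1\in\cS_x$, $S_2\in\cS_y$ and $S=S_1\times S_2$, the quantity $\dashint_{S_1}\dashint_{S_2}|f(x,y)-(f_x)_{S_2}-(f_y)_{S_1}+f_S|\,dy\,dx$ is symmetric under interchanging the pairs $(x,S_1)$ and $(y,S_2)$ — the iterated integral may be evaluated in either order because $f\in\Lone(S)$ — so it suffices to establish the first estimate.

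The crux is the elementary identity
$$
f(x,y)-(f_x)_{S_2}-(f_y)_{S_1}+f_S=g(x,y)-\dashint_{S_2}\!g(x,y')\,dy',
\qquad
g(x,y):=f(x,y)-(f_y)_{S_1},
$$
which follows by expanding the right-hand side and using $\dashint_{S_2}(f_{y'})_{S_1}\,dy'=f_S$. The advantage of passing to $g$ is that for each fixed $y$ the slice $x\mapsto g(x,y)$ has mean zero over $S_1$, so that
$$
\dashint_{S_1}\!|g(x,y)|\,dx=\dashint_{S_1}\!|f_y(x)-(f_y)_{S_1}|\,dx\le\|f_y\|_{\BMO{\cS_x}{}(\R^{n_1})}\le\|f\|_{\BMO{\cS_x}{}}
$$
for every $y$, directly from the definitions.

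It then remains only to combine these facts: since subtracting an average at most doubles the mean deviation, the triangle inequality gives $\dashint_{S_1}\dashint_{S_2}|g(x,y)-\dashint_{S_2}g(x,y')\,dy'|\,dy\,dx\le 2\dashint_{S_1}\dashint_{S_2}|g(x,y)|\,dy\,dx$, and by Fubini and the previous display the right-hand side equals $2\dashint_{S_2}\big(\dashint_{S_1}|g(x,y)|\,dx\big)\,dy\le 2\|f\|_{\BMO{\cS_x}{}}$; taking the supremum over $S_1\in\cS_x$ and $S_2\in\cS_y$ yields $\|f\|_{\BMO{\text{rec},\cS}{}}\le 2\|f\|_{\BMO{\cS_x}{}}$, and running the same argument with $x$ and $y$ exchanged gives the companion bound. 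I do not anticipate a real obstacle: all of the content sits in recognising the right reformulation of the rectangular oscillation — namely that it is ``$f$ minus its $x$-average minus its $y$-average plus its double average'' and that subtracting the $x$-average annihilates the mean in $x$ — after which only the triangle inequality and Fubini (legitimate because $f$ is integrable on each shape) are used; the one step that warrants a careful check is the verification of the displayed identity.
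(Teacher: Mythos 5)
Your proof is correct and is essentially the paper's argument in a lightly repackaged form: your identity $f-(f_x)_{S_2}-(f_y)_{S_1}+f_S=g-\dashint_{S_2}g$ with $g=f-(f_y)_{S_1}$, followed by the triangle inequality and Fubini, reproduces exactly the paper's splitting into $|f(x,y)-(f_y)_{S_1}|$ and $|(f_x)_{S_2}-f_S|$, each bounded by $\|f\|_{\BMO{\cS_x}{}}$, with the symmetric argument giving the bound in terms of $\|f\|_{\BMO{\cS_y}{}}$. No gaps; the factor $2$ and the handling of the union hypothesis match the paper.
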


\begin{proof}
We have 
$$
\dashint_{S_1}\dashint_{S_2}\!|f(x,y)-(f_{y})_{S_1} |\,{d}y\,{d}x \leq \sup_{y\in S_2}\dashint_{S_1}\!|f_y(x)-(f_{y})_{S_1} |\,{d}x = \norm{f}{\BMO{\cS_x}{}}
$$
and
\[
\begin{split}
\dashint_{S_1}\dashint_{S_2}\!|(f_{x})_{S_2}-f_{S}|\,{d}y\,{d}x=\dashint_{S_1}\!|(f_{x})_{S_2}-f_{S}|\,{d}x &= \dashint_{S_1}\!\left|\dashint_{S_2}\!f_{x}(y)\,{d}y-\dashint_{S_2}\!(f_y)_{S_1}\,{d}y\right|\,{d}x\\&\leq \dashint_{S_2}\dashint_{S_1}\!|f_{y}(x)-(f_y)_{S_1}|\,{d}x\,{d}y \\&\leq \dashint_{S_2}\|f_y\|_{\BMO{\cS_x}{}}  \leq\norm{f}{\BMO{\cS_x}{}}.
\end{split}
\]
Thus, writing
$$
|f(x,y)-(f_{x})_{S_2}-(f_{y})_{S_1} +f_{S}|\leq |f(x,y)-(f_{y})_{S_1} |+ |(f_{x})_{S_2}-f_{S}|,
$$
it follows that
$$
\dashint_{S_1}\dashint_{S_2}\!|f(x,y)-(f_{x})_{S_2}-(f_{y})_{S_1} +f_{S}|\,{d}y\,{d}x\leq 2 \norm{f}{\BMO{\cS_x}{}}.
$$

Similarly, one shows that $\|f\|_{\BMO{\text{rec},\cS}{}}\leq 2 \norm{f}{\BMO{\cS_y}{}}$.
\end{proof}

\begin{proposition}
\label{strong-rec}
If $f\in \BMO{\cS}{}(\R^n)$, then $f\in \BMO{\text{rec},\cS}{}(\R^{n_1}\times\R^{n_2})$ with
$$
\|f\|_{\BMO{\text{rec},\cS}{}}\leq 3 \|f\|_{\BMO{\cS}{}}.
$$ 
\end{proposition}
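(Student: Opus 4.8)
The plan is to exploit the strong decomposition property, which guarantees that for $S_1\in\cS_x$ and $S_2\in\cS_y$ the product $S=S_1\times S_2$ belongs to $\cS$, so that the full oscillation $\dashint_S|f-f_S|$ is controlled by $\|f\|_{\BMO{\cS}{}}$. Fixing such $S_1,S_2$ and writing $S=S_1\times S_2$, recall that $(f_x)_{S_2}=\dashint_{S_2}f(x,y)\,{d}y$, $(f_y)_{S_1}=\dashint_{S_1}f(x,y)\,{d}x$, and $f_S=\dashint_{S_1}\dashint_{S_2}f(x,y)\,{d}y\,{d}x$. The first step is the algebraic rewriting
\[
f(x,y)-(f_x)_{S_2}-(f_y)_{S_1}+f_S=\big(f(x,y)-f_S\big)-\big((f_x)_{S_2}-f_S\big)-\big((f_y)_{S_1}-f_S\big),
\]
followed by the triangle inequality to split the integrand into three pieces.

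The second step is to estimate each of the three resulting averages by $\|f\|_{\BMO{\cS}{}}$. For the first, since $S\in\cS$,
\[
\dashint_{S_1}\dashint_{S_2}|f(x,y)-f_S|\,{d}y\,{d}x=\dashint_S|f-f_S|\leq\|f\|_{\BMO{\cS}{}}.
\]
For the second, the key observation is that, $f_S$ being a constant, $(f_x)_{S_2}-f_S=\dashint_{S_2}\big(f(x,y)-f_S\big)\,{d}y$, so that by Jensen's inequality and Fubini's theorem
\[
\dashint_{S_1}\dashint_{S_2}|(f_x)_{S_2}-f_S|\,{d}y\,{d}x\leq\dashint_{S_1}\dashint_{S_2}|f(x,y)-f_S|\,{d}y\,{d}x=\dashint_S|f-f_S|\leq\|f\|_{\BMO{\cS}{}}.
\]
The third piece, $\dashint_{S_1}\dashint_{S_2}|(f_y)_{S_1}-f_S|\,{d}y\,{d}x$, is handled in exactly the same way with the roles of $S_1$ and $S_2$ interchanged. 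Adding the three bounds and taking the supremum over $S_1\in\cS_x$ and $S_2\in\cS_y$ yields $\|f\|_{\BMO{\text{rec},\cS}{}}\leq 3\|f\|_{\BMO{\cS}{}}$.

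I do not expect a genuine obstacle here: the argument is a triangle-inequality decomposition followed by three essentially identical averaging estimates. The only point requiring a little care is the bookkeeping of the means -- recognising that the two ``mixed'' terms $(f_x)_{S_2}-f_S$ and $(f_y)_{S_1}-f_S$ are themselves averages of $f-f_S$ (over $S_2$ and over $S_1$, respectively), which is precisely what allows Jensen's inequality to collapse them back onto the single average $\dashint_S|f-f_S|$. It is worth emphasising that the strong decomposition property is used essentially, since without $S_1\times S_2\in\cS$ one could not invoke $\dashint_S|f-f_S|\leq\|f\|_{\BMO{\cS}{}}$.
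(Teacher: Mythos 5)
Your proposal is correct and follows essentially the same route as the paper: the same three-term decomposition $f-f_S-[(f_x)_{S_2}-f_S]-[(f_y)_{S_1}-f_S]$, with the two mixed terms collapsed onto $\dashint_S|f-f_S|$ by averaging (Jensen/Fubini), giving the constant $3$. No gaps.
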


\begin{proof}
We have 
$$
\dashint_{S_1}\dashint_{S_2}\!|f(x,y)-f_{S}|\,{d}y\,{d}x\leq \|f\|_{\BMO{\cS}{}},
$$
$$
\dashint_{S_1}\dashint_{S_2}\!|(f_x)_{S_2}-f_S|\,{d}y\,{d}x = \dashint_{S_1}\!\left|\dashint_{S_2}\!f_x(y)\,{d}y-f_S\right|\,{d}x\leq \dashint_{S_1}\dashint_{S_2}\!|f(x,y)-f_{S}|\,{d}y\,{d}x\leq \|f\|_{\BMO{\cS}{}},
$$
and, similarly, 
$$
\dashint_{S_1}\dashint_{S_2}\!|(f_y)_{S_1}- f_S|\,{d}y\,{d}x\leq \|f\|_{\BMO{\cS}{}}. 
$$
Thus, writing 
$$
f(x,y)-(f_x)_{S_2} - (f_y)_{S_1} + f_S = [f(x,y)-f_{S}] - [(f_x)_{S_2}-f_S] - [(f_y)_{S_1}- f_S],
$$
it follows that 
$$
\dashint_{S_1}\dashint_{S_2}\!|f(x,y)-(f_{x})_{S_2}-(f_{y})_{S_1} +f_{S}|\,{d}y\,{d}x\leq 3\|f\|_{\BMO{\cS}{}}.  
$$
\end{proof}

\begin{remark}\label{remark}
In the case where $\cS_x,\cS_y$ each contain a differentiation basis that differentiates $\Loneloc(\R^{n_1})$ and $\Loneloc(\R^{n_2})$, respectively, another proof is possible using Theorem \ref{otherpaper} and Proposition \ref{int}. We identify $\BMO{\cS}{}(\R^n)$ with $\BMO{\cS_x}{}(\R^n)\cap \BMO{\cS_y}{}(\R^n)$, so that
$$
\BMO{\cS}{}(\R^n)\subset\BMO{\cS_x}{}(\R^n)\cup \BMO{\cS_y}{}(\R^n)\subset \BMO{\text{rec},\cS}{}(\R^{n_1}\times\R^{n_2}),
$$
with $\|f\|_{\BMO{\text{rec},\cS}{}}\leq 4 \|f\|_{\BMO{\cS}{}}$.
\end{remark}

Unlike $\BMO{\cS}{}(\R^n)$, it turns out that $\BMO{\text{rec},\cS}{}(\R^{n_1}\times\R^{n_2})$ may not be a lattice. As $\BMO{\text{rec},\cS}{}(\R^{n_1}\times\R^{n_2})$ is a linear space, this property is equivalent to being closed under taking absolute values.

\begin{example}
Consider $f(x,y)=x-y$. We have that $f(x,y)-(f_{x})_{S_2}-(f_{y})_{S_1} +f_{S}$ equals
$$
(x-y)-\left(x-\dashint_{S_2}\!y\,{d}y \right)-\left(\dashint_{S_1}\!x\,{d}x - y \right)+\left(\dashint_{S_1}\!x\,{d}x-\dashint_{S_2}\!y\,{d}y \right)=0,
$$
and so it follows that $f\in \BMO{\text{rec},\cS}{}(\R\times\R)$ for any basis $\cS$. 

For the function $h(x,y)=|f(x,y)|=|x-y|$, however, a computation shows that if $S_1=S_2=I_L=[0,L]$ for $L>0$, then
$$
\dashint_{I_L}\dashint_{I_L}\!|h(x,y)-(h_{x})_{I_L}-(h_{y})_{I_L} +h_{I_L\times I_L}|\,{d}x\,{d}y = \frac{2}{L^2}\int_{0}^{L}\int_{0}^{y}\!\left|2y-\frac{x^2+y^2}{L} -\frac{2L}{3} \right|\,{d}x\,{d}y
$$
by symmetry of the integrand with respect to the line $y=x$. As the integral of the expression inside the absolute value is zero on $I_L\times I_L$, it follows that
$$
\frac{2}{L^2}\int_{0}^{L}\int_{0}^{y}\!\left|2y-\frac{x^2+y^2}{L} -\frac{2L}{3} \right|\,{d}x\,{d}y=\frac{4}{L^2}\iint_{R}\!\left[2y-\frac{x^2+y^2}{L} -\frac{2L}{3} \right]{d}x\,{d}y,
$$
where $R$ is the region defined by the conditions $0\leq{x}\leq{y}$, $0\leq{y}\leq L$, $2y\geq\frac{x^2+y^2}{L}+\frac{2L}{3}$. This region corresponds to the intersection of the disc $x^2+(y-L)^2\leq\frac{L^2}{3}$ and the upper triangle of the square $I_L\times I_L$. Converting to polar coordinates relative to this region, one can compute
\[
\begin{split}
\iint_{R}\!\left[2y-\frac{x^2+y^2}{L} -\frac{2L}{3} \right]{d}x\,{d}y &=\frac{1}{L}\iint_{R}\!\left[\frac{L^2}{3}-x^2-(y-L)^2 \right]{d}x\,{d}y\\&=\frac{1}{L} \int_{0}^{\frac{L}{\sqrt{3}}}\int_{0}^{\pi/2}\!\left(\frac{L^2}{3}-r^2\right)r\,{d}\theta\,{d}r =\frac{\pi L^3}{72}.
\end{split}
\]

Therefore, 
$$
\dashint_{I_L}\dashint_{I_L}\!|h(x,y)-(h_{x})_{I_L}-(h_{y})_{I_L} +h_{I_L\times I_L}|\,{d}x\,{d}y = \frac{4}{L^2}\times \frac{\pi L^3}{72} = \frac{\pi L}{18}\rightarrow\infty\,\text{as}\,L\rightarrow\infty,
$$
showing that $h\not\in \BMO{\text{rec},\cR}{}(\R\times\R)$.
\end{example}

Just as we defined rectangular $\BMO{}{}$, there is a possible analogous definition of rectangular $\BLO{}{}$, defined by having bounded averages of the form
$$
\dashint_{S_1}\dashint_{S_2}\!|f(x,y)-\essinf_{S_2}f_{x}-\essinf_{S_1}f_{y} +\essinf_{S}f|\,{d}y\,{d}x.
$$ 
This definition, however, has a few deficiencies. For one, without the absolute values, the integrand is not necessarily non-negative, which is something one would expect from any class labelled as $\BLO{}{}$. Another property of $\BLO{}{}$ that fails with this definition is being an upper semilattice, as exhibited by the following example.

\begin{example} \label{eg}
If $f(x,y)=x$ and $g(x,y)=y$, then, for any shapes $S_1,S_2$,
$$
f(x,y)-\essinf_{S_2}f_{x}-\essinf_{S_1}f_{y} +\essinf_{S}f = x - x - \essinf_{S_1}x + \essinf_{S_1}x=0
$$
for almost every $x\in S_1$ and 
$$
g(x,y)-\essinf_{S_2}g_{x}-\essinf_{S_1}g_{y} +\essinf_{S}g = y - \essinf_{S_2}y - y + \essinf_{S_2}y=0
$$
for almost every $y\in S_2$. 

Considering the function $h(x,y)=\max(x,y)$, however, and $S_1=S_2=I_L=[0,L]$ for $L>0$. We have that
$$
\dashint_{I_L}\dashint_{I_L}\!|h(x,y)-\essinf_{I_L}h_{x}-\essinf_{I_L}h_{y} +\essinf_{I_L\times I_L}h|\,{d}y\,{d}x
$$
equals 
$$
\frac{1}{L^2}\int_{0}^{L}\int_{0}^{L}\!|\max(x,y)-x-y|\,{d}y\,{d}x =\frac{1}{L^2}\int_{0}^{L}\int_{0}^{L}\!\min(x,y)\,{d}y\,{d}x = \frac{1}{L^2} \times \frac{L^3}{3}=\frac{L}{3},
$$
which tends to $\infty$ as $L\rightarrow\infty$.
\end{example}

These deficiencies are rectified if the essential infimum of $f$ over $S_1\times S_2$ is replaced by the minimum of the essential infima of $f_x$ over $S_2$ and $f_y$ over $S_1$:
$$
\dashint_{S_1}\dashint_{S_2}\!|f(x,y)-\essinf_{S_2}f_{x}-\essinf_{S_1}f_{y} +\min\{\essinf_{S_2}f_x,\essinf_{S_y}f_x \} |\,{d}y\,{d}x.
$$
The identity $\max(a,b)+\min(a,b)=a+b$ gives us that this is equal to 
$$
\dashint_{S_1}\dashint_{S_2}\![f(x,y)-\max\{\essinf_{S_2}f_x,\essinf_{S_1}f_y \} ]\,{d}y\,{d}x,
$$
where the integrand is now clearly non-negative almost everywhere. Boundedness of these averages is the definition we choose for rectangular $\BLO{}{}$. 

An additional benefit to this definition is that it can be defined at a higher level of generality. As in Section 4, we decompose $\R^n=\R^{n_1}\times\R^{n_2}\times\ldots\times\R^{n_k}$ for $2\leq{k}\leq{n}$ and let $\cS_i$ be a basis for $\R^{n_i}$ for each $1\leq{i}\leq{k}$. We continue to assume that $\cS$ has a strong decomposition property, but now with respect to $\{\cS_i\}_{i=1}^{k}$. Recall that $\hat{z}_i$ denotes the result of deleting the $i$th component from $z\in\R^n$ and that $f_{\hat{z}_i}$ denotes the function on $\R^{n_i}$ obtained from $f$ by fixing the other components equal to $\hat{z}_i$.

\begin{definition}
We say that $f$ is in $\BLO{\text{rec},\cS}{}(\R^{n_1}\times\R^{n_2}\times\ldots\times\R^{n_k})$ if
\begin{equation}
\|f\|_{\BLO{\text{rec},\cS}{}}:=\sup_{S\in\cS}\dashint_{S}\![f(z)-\max_{1\leq{i}\leq{k}}\{\essinf_{S_i} f_{\hat{z}_i}\}]\,{d}z<\infty,
\end{equation}
where $S=S_1\times S_2\times\ldots \times S_k$. 
\end{definition}

\begin{proposition}
\label{latt}
$\BLO{\text{rec},\cS}{}(\R^{n_1}\times\R^{n_2}\times\ldots\times\R^{n_k})$ is an upper semilattice with 
$$
\|\max(f,g)\|_{\BLO{\text{rec},\cS}{}}\leq \|f\|_{\BLO{\text{rec},\cS}{}}+\|g\|_{\BLO{\text{rec},\cS}{}}.
$$
\end{proposition}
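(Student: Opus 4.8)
The plan is to mimic the proof of Proposition~\ref{blolatt} almost verbatim, adding one observation to handle the $\max$ of essential infima appearing in the definition of rectangular $\BLO{}{}$. Fix $f,g\in\BLO{\text{rec},\cS}{}(\R^{n_1}\times\R^{n_2}\times\ldots\times\R^{n_k})$, set $h=\max(f,g)$, and fix a shape $S=S_1\times S_2\times\ldots\times S_k\in\cS$.

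First I would record two elementary facts. For each fixed $\hat{z}_i$ we have $h_{\hat{z}_i}=\max(f_{\hat{z}_i},g_{\hat{z}_i})$ as functions on $\R^{n_i}$, whence $\essinf_{S_i}h_{\hat{z}_i}\geq\essinf_{S_i}f_{\hat{z}_i}$ and $\essinf_{S_i}h_{\hat{z}_i}\geq\essinf_{S_i}g_{\hat{z}_i}$; taking the maximum over $i$ gives $\max_i\essinf_{S_i}h_{\hat{z}_i}\geq\max_i\essinf_{S_i}f_{\hat{z}_i}$ and likewise with $g$ in place of $f$. Second, for such a function $f$ (and likewise for $g$) the integrand $z\mapsto f(z)-\max_i\essinf_{S_i}f_{\hat{z}_i}$ is non-negative for a.e.\ $z\in S$: for each fixed $i$, Fubini's theorem gives $f(z)=f_{\hat{z}_i}(z_i)\geq\essinf_{S_i}f_{\hat{z}_i}$ for a.e.\ $z\in S$, and intersecting these $k$ full-measure sets yields the claim (this is also what forces $\max_i\essinf_{S_i}f_{\hat{z}_i}$ to be finite a.e.\ on $S$ when $\|f\|_{\BLO{\text{rec},\cS}{}}<\infty$).

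Then I would split $S$ along $E=\{z\in S:f(z)\geq g(z)\}$. On $E$ we have $h=f$, so by the first fact $h(z)-\max_i\essinf_{S_i}h_{\hat{z}_i}\leq f(z)-\max_i\essinf_{S_i}f_{\hat{z}_i}$; on $S\setminus E$ we have $h=g$, giving the analogous bound with $g$. Integrating over $S$ and using the second fact to enlarge $E$ and $S\setminus E$ each back to all of $S$ in the two respective integrals, we obtain
$$
\int_{S}\![h-\max_i\essinf_{S_i}h_{\hat{z}_i}]\leq\int_{S}\![f-\max_i\essinf_{S_i}f_{\hat{z}_i}]+\int_{S}\![g-\max_i\essinf_{S_i}g_{\hat{z}_i}]\leq|S|\bigl(\|f\|_{\BLO{\text{rec},\cS}{}}+\|g\|_{\BLO{\text{rec},\cS}{}}\bigr).
$$
Dividing by $|S|$ and taking the supremum over $S\in\cS$ gives the result. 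There is essentially no obstacle; the only point needing care is the non-negativity of the integrand — exactly the step in Proposition~\ref{blolatt} that allows one to pass from $\int_E$ and $\int_{S\setminus E}$ back to $\int_S$ — and this is immediate here since only finitely many full-measure sets are intersected.
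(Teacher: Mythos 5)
Your argument is correct and is exactly the route the paper takes: its proof of Proposition~\ref{latt} simply says it is the same as that of Proposition~\ref{blolatt}, i.e.\ split over $E=\{f\geq g\}$, use that $\max_i\essinf_{S_i}h_{\hat{z}_i}$ dominates the corresponding quantities for $f$ and $g$, and enlarge the integrals back to $S$ by non-negativity of the integrand. You have merely made explicit the two facts the paper leaves implicit (the monotonicity of the maximum of essential infima and the a.e.\ non-negativity, which the paper also uses elsewhere), so there is nothing to correct.
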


\begin{proof}
The proof is the same as that of Proposition \ref{blolatt}.
\end{proof}

The following generalisation of Example \ref{eg} illustrates Proposition \ref{latt}.

\begin{example}
If $f$ is a function of some variable $z_{i_1}$ alone, that is $f(z)=F(z_{i_1})$ for some function $F$, and $g$ is a function of $z_{i_2}$ alone, that is $g(z)=G(z_{i_2})$ for some function $G$, then, for any shape $S$,
$$
f(z)-\max_{1\leq{i}\leq{k}}\{\essinf_{S_{i}} f_{\hat{z}_i}\} = F(z_{i_1})-\max(F(z_{i_1}),\essinf_{S_{i_1}} F)=0
$$
for almost every $z_{i_1}\in S_{i_1}$ and 
$$
g(z)-\max_{1\leq{i}\leq{k}}\{\essinf_{S_{i}} g_{\hat{z}_i}\} = G(z_{i_2})-\max(G(z_{i_2}),\essinf_{S_{i_2}} G)=0
$$
for almost every $z_{i_2}\in S_{i_2}$. Therefore, $\|f\|_{\BLO{\text{rec},\cS}{}}=\|g\|_{\BLO{\text{rec},\cS}{}}=0$.

Meanwhile, if $h(z)=\max(f(z),g(z))=\max(F(z_{i_1}),G(z_{i_2}))$, then for any shape $S$,
$$
h(z)-\max_{1\leq{i}\leq{k}}\{\essinf_{S_{i}} h_{\hat{z}_i}\} = \max(F(z_{i_1}),G(z_{i_2})) - \max(F(z_{i_1}),G(z_{i_2}))=0,
$$
and so $\|h\|_{\BLO{\text{rec},\cS}{}}=0$.
\end{example}

This example shows that taking functions of one variable and the maximum of two such functions yields examples of zero elements of rectangular $\BLO{}{}$. Other sources of examples come from the following two propositions.

\begin{proposition}\label{hm}
If $f\in \bigcup_{i=1}^{k}\BLO{\cS_i}{}(\R^n)$, then $f\in \BLO{\text{rec},\cS}{}(\R^{n_1}\times\R^{n_2}\times\ldots\times\R^{n_k})$ with 
$$
\|f\|_{\BLO{\text{rec},\cS}{}}\leq \min_{1\leq i \leq k}\{\|f\|_{\BLO{\cS_i}{}}\}.
$$ 
\end{proposition}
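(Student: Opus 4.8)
The plan is to reduce the rectangular lower oscillation to a one-directional lower oscillation by discarding all but one coordinate block. By hypothesis there is an index $j\in\{1,\dots,k\}$ for which $f\in\BLO{\cS_j}{}(\R^n)$; I fix such a $j$ and will pass to the infimum over all such indices at the end, with the usual convention that $\|f\|_{\BLO{\cS_i}{}}=+\infty$ when $f\notin\BLO{\cS_i}{}(\R^n)$. Now fix a shape $S\in\cS$ and, using the strong decomposition property of $\cS$ with respect to $\{\cS_i\}_{i=1}^k$, write $S=S_1\times S_2\times\cdots\times S_k$ with $S_i\in\cS_i$.

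The key observation is that the maximum defining the integrand of $\|\cdot\|_{\BLO{\text{rec},\cS}{}}$ is taken over all $k$ blocks, so in particular $\max_{1\le i\le k}\{\essinf_{S_i}f_{\hat{z}_i}\}\ge\essinf_{S_j}f_{\hat{z}_j}$; hence, pointwise for a.e.\ $z\in S$,
$$
0\le f(z)-\max_{1\le i\le k}\{\essinf_{S_i}f_{\hat{z}_i}\}\le f(z)-\essinf_{S_j}f_{\hat{z}_j},
$$
the left inequality being the a.e.\ non-negativity noted after the definition (which also guarantees the integral is meaningful). Integrating over $S$ and applying Fubini so as to perform the $z_j$-integration first, with the remaining blocks frozen at $\hat{z}_j$, the inner integral is exactly
$$
\dashint_{S_j}\![f_{\hat{z}_j}(z_j)-\essinf_{S_j}f_{\hat{z}_j}]\,dz_j\le\|f_{\hat{z}_j}\|_{\BLO{\cS_j}{}(\R^{n_j})}\le\|f\|_{\BLO{\cS_j}{}(\R^n)}.
$$
Averaging this uniform bound over the remaining variable $\hat{z}_j$ leaves $\|f\|_{\BLO{\cS_j}{}(\R^n)}$ on the right, so $\dashint_S[f-\max_i\{\essinf_{S_i}f_{\hat{z}_i}\}]\le\|f\|_{\BLO{\cS_j}{}(\R^n)}$. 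Taking the supremum over $S\in\cS$ gives $\|f\|_{\BLO{\text{rec},\cS}{}}\le\|f\|_{\BLO{\cS_j}{}(\R^n)}$, and since $j$ was an arbitrary index for which $f$ lies in $\BLO{\cS_j}{}(\R^n)$, passing to the minimum yields the claim.

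I do not expect a serious obstacle: the argument is a direct adaptation of the one-block estimate fed into the Fubini identity for product shapes. The only points that merit a word of care are the measurability of $\hat{z}_j\mapsto\essinf_{S_j}f_{\hat{z}_j}$, which can be dispatched exactly as in the proof of Theorem \ref{thispaper}, and the verification that the rectangular integrand is a.e.\ non-negative, so that replacing the full maximum by the single $j$-th term only enlarges the integral --- both routine.
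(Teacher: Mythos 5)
Your argument is correct and is essentially the paper's own proof: drop the maximum to the single term $\essinf_{S_j}f_{\hat{z}_j}$, integrate in the $z_j$-block first via Fubini, bound the slice oscillation by $\|f\|_{\BLO{\cS_j}{}(\R^n)}$, and then take the minimum over admissible $j$ (the paper runs the same chain of inequalities for each $i$). The remarks on a.e.\ non-negativity and the infinite-norm convention are fine and do not change the substance.
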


\begin{proof}
Write 
\[
\begin{split}
\dashint_{S}\![f(z)-\max_{1\leq{i}\leq{k}}\{\essinf_{S_i} f_{\hat{z}_i}\}]\,{d}z&\leq \dashint_{S}\![f(z)-\essinf_{S_i} f_{\hat{z}_i}]\,{d}z\\&\leq \dashint_{\hat{S}_i}\!\|f_{\hat{z}_i}\|_{\BLO{\cS_i}{}}\,{d}z\leq \|f\|_{\BLO{\cS_i}{}}
\end{split}
\]
for each $1\leq i \leq k$, where $\hat{S}_i$ is the result of deleting $S_i$ from $S$. From this it follows that $\BLO{\cS_i}{}(\R^n)\subset \BLO{\text{rec},\cS}{}(\R^{n_1}\times\R^{n_2}\times\ldots\times\R^{n_k})$ for $1\leq i \leq k$.
\end{proof}

\begin{proposition}
If $f\in \BLO{\cS}{}(\R^n)$, then $f\in \BLO{\text{rec},\cS}{}(\R^{n_1}\times\R^{n_2}\times\cdots\times\R^{n_k})$ with 
$$
\|f\|_{\BLO{\text{rec},\cS}{}}\leq \|f\|_{\BLO{\cS}{}}.
$$
\end{proposition}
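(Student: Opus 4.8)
The plan is to reduce the rectangular lower-oscillation estimate to the ordinary $\BLO{\cS}{}$ estimate, exploiting the elementary fact that the essential infimum of a function over a lower-dimensional slice dominates its essential infimum over the full shape. Fix a shape $S\in\cS$; since $\cS$ has the strong decomposition property with respect to $\{\cS_i\}_{i=1}^k$, we may write $S=S_1\times S_2\times\cdots\times S_k$ with $S_i\in\cS_i$, and this decomposition is exactly the one appearing in the definition of $\|f\|_{\BLO{\text{rec},\cS}{}}$.

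The first step is the pointwise comparison
$$
\essinf_{S_i}f_{\hat{z}_i}\ge\essinf_{S}f\qquad\text{for each }1\le i\le k\text{ and a.e. }z\in S.
$$
This is a Tonelli argument: if $f\ge c$ almost everywhere on $S$, then for almost every $\hat{z}_i$ the slice $f_{\hat{z}_i}$ satisfies $f_{\hat{z}_i}\ge c$ almost everywhere on $S_i$, whence $\essinf_{S_i}f_{\hat{z}_i}\ge c$; letting $c\uparrow\essinf_{S}f$ gives the claim. Taking the maximum over $i$ preserves the inequality, so $\max_{1\le i\le k}\{\essinf_{S_i}f_{\hat{z}_i}\}\ge\essinf_{S}f$ for almost every $z\in S$, and in particular the integrand defining $\|f\|_{\BLO{\text{rec},\cS}{}}$ is dominated pointwise a.e. by $f(z)-\essinf_{S}f$.

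The second step is then immediate:
$$
\dashint_{S}\Big[f(z)-\max_{1\le i\le k}\{\essinf_{S_i}f_{\hat{z}_i}\}\Big]\,dz\le\dashint_{S}\big[f(z)-\essinf_{S}f\big]\,dz\le\|f\|_{\BLO{\cS}{}},
$$
the last inequality being the definition of $\BLO{\cS}{}$ applied to the shape $S\in\cS$. Taking the supremum over all $S\in\cS$ yields $\|f\|_{\BLO{\text{rec},\cS}{}}\le\|f\|_{\BLO{\cS}{}}$. The whole argument runs parallel to the proof of Proposition~\ref{hm}, with $\essinf_{S}f$ playing the role of $\essinf_{S_i}f_{\hat{z}_i}$; the only point needing a word of care is the measure-theoretic justification of the slice comparison above, and that is a routine consequence of Tonelli's theorem, so I anticipate no real obstacle. (Alternatively, when each $\cS_i$ contains a differentiation basis, one could combine Theorem~\ref{thispaper}(b) with Proposition~\ref{hm} to reach the same bound, but the direct argument needs no such hypothesis.)
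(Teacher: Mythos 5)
Your proposal is correct and follows essentially the same route as the paper: the paper's proof is exactly the observation that $\essinf_{S}f\leq \max_{1\leq i\leq k}\{\essinf_{S_i} f_{\hat{z}_i}\}$ holds almost everywhere, followed by the same one-line domination of the averaged integrand by the $\BLO{\cS}{}$ oscillation. Your Tonelli justification of the slice comparison is a welcome bit of extra care, but otherwise the two arguments coincide.
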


\begin{proof}
This follows from the fact
that
$$
\essinf\limits_{S}f\leq \max_{1\leq{i}\leq{k}}\{\essinf_{S_i} f_{\hat{z}_i}\}
$$ 
holds almost everywhere. Therefore,
$$
\dashint_{S}\![f(z)-\max_{1\leq{i}\leq{k}}\{\essinf_{S_i} f_{\hat{z}_i}\}]\,{d}z\leq \dashint_{S}\![f(z)-\essinf_{S}f]\,{d}z\leq \|f\|_{\BLO{\cS}{}}.
$$
\end{proof}

\begin{remark}
In the case where each $\cS_i$ contains a differentiation basis that differentiates $\Loneloc(\R^{n_i})$, another proof is possible using Theorem \ref{thispaper} and Proposition \ref{hm}, by analogy with Remark \ref{remark}.
\end{remark}

One way of generating a function in $\BLO{\cS}{}(\R^n)$ is demonstrated in the following example. This allows us to exhibit a function in $\BLO{\text{rec},\cS}{}$ with non-zero norm.

\begin{example}
Let $g(x)\in \BLO{}{}(\R)$ and then consider $f(x,y)=g(x-y)$. Writing $\cI_x$ for the basis of intervals in the $x$-direction and analogously for $\cI_y$, we have that $f\in \BLO{\cI_x}{}(\R^2)\cap \BLO{\cI_y}{}(\R^2)$. From Theorem \ref{thispaper}, it follows that $f\in \BLO{\cR}{}(\R^2)$. One can check that $\|f\|_{\BLO{\cR}{}}\leq \|g\|_{\BLO{}{}}$.

In particular, $f(x,y)=-\log|x-y|$ is in $\BLO{\cR}{}(\R^2)$ and has non-zero norm. Regarding $\R^2$ as $\R\times\R$ and taking the rectangle $[0,1]\times[1,2]$, one can compute $\|f\|_{\BLO{\text{rec},\cR}{}}\geq 2\log{2}-1$.  
\end{example}


\section{{\bf Strong product bases}}


Write $\R^n=\R^{n_1}\times\R^{n_2}\times\ldots\times\R^{n_k}$ for $2\leq{k}\leq{n}$ where $n_1+n_2+\ldots+n_k=n$. For $z\in\R^n$, denote by $z_i\in\R^{n_i}$ its $i$th coordinate, according to this decomposition. 

Let $\cS$ be a basis for $\R^n$ and $\cS_i$ be a basis for $\R^{n_i}$ for each $1\leq{i}\leq{k}$. Assume that $\cS$ has the strong decomposition property with respect to $\{\cS_i\}_{i=1}^{k}$, that each $\cS_i$ is an engulfing basis with constants $c_d^i$ and $c_e^i$, and that each $\cS_i$ contains a differentiation basis that differentiates $\Loneloc(\R^{n_i})$. We will call such a basis a strong product basis. 

\begin{theorem}\label{theorem}
Let $\cS$ be a strong product basis such that there exists a $p\in(1,\infty)$ for which $M_{\cS}$ is bounded on $L^p(\R^n)$ with norm $A_p$. If $f\in\BMO{\cS}{p}(\R^n)$, then 
\begin{equation}\label{us}
\dashint_{S}\!M_{\cS}f(z)\,{}dz\leq c\,\|f\|_{\BMO{\cS}{p}} +\dashint_{S}\max_{1\leq i \leq k}\left\{\essinf_{S_i}(M_{\cS}f)_{\hat{z}_i}\right\}{d}z,
\end{equation}
for all $S\in\cS$, where $c$ is a constant depending on $p,n,k,A_p$, $\{c_d^i\}_{i=1}^{k}$, $\{c_e^i\}_{i=1}^{k}$. Assuming that the right-hand side of \eqref{us} is finite for every shape $S\in\cS$, it follows that $M_{\cS}f$ is finite almost everywhere and $M_{\cS}f\in \BLO{\text{rec},\cS}{}(\R^{n_1}\times\R^{n_2}\times\cdots\times\R^{n_k})$ with
$$
\|M_{\cS}f\|_{\BLO{\text{rec},\cS}{}}\leq c\,\|f\|_{\BMO{\cS}{p}}.
$$

Moreover, if $\BMO{\cS}{p}(\R^n)\cong\BMO{\cS}{}(\R^n)$, then $\|M_{\cS}f\|_{\BLO{\text{rec},\cS}{}}\leq C\,\|f\|_{\BMO{\cS}{}}$ holds for all $f\in \BMO{\cS}{}(\R^n)$ for which $M_{\cS}f$ is finite almost everywhere.
\end{theorem}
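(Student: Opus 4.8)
The plan is to follow the proof of Theorem~\ref{benn} as closely as possible, using the product results of the previous sections to turn that one-parameter argument into the $k$-parameter estimate \eqref{us}. First I would make two reductions. Since $\cS$ is a strong product basis, Theorem~\ref{otherpaper}(b) applies, so $f\in\BMO{\cS}{p}(\R^n)$ yields $f\in\bigcap_{i=1}^{k}\BMO{\cS_i}{p}(\R^n)$ with $\|f\|_{\BMO{\cS_i}{p}(\R^n)}\lesssim\|f\|_{\BMO{\cS}{p}}$, i.e.\ the slices $f_{\hat z_i}$ lie in $\BMO{\cS_i}{p}(\R^{n_i})$ uniformly in $\hat z_i$; and the $L^p(\R^n)$-boundedness of $M_{\cS}$ forces $M_{\cS_i}$ to be bounded on $L^p(\R^{n_i})$ with norm $\le A_p$, since for $f_i\in L^p(\R^{n_i})$ and a fixed nonzero compactly supported $\phi$ in the remaining variables the strong decomposition property gives $M_{\cS}(f_i\otimes\phi)=(M_{\cS_i}f_i)\otimes(M_{\cS'}\phi)$, with $\cS'=\prod_{j\ne i}\cS_j$, so dividing the $L^p$ bound by $\|M_{\cS'}\phi\|_{L^p}>0$ gives the claim. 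These are exactly the hypotheses needed to apply Theorem~\ref{benn} (and Bennett's inequality) in each factor $\R^{n_i}$.

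For \eqref{us}, I would fix $S=S_1\times\cdots\times S_k\in\cS$, put $\widetilde S=\widetilde S_1\times\cdots\times\widetilde S_k\in\cS$ (strong decomposition), note $|\widetilde S|\le\big(\prod_i c_d^i\big)|S|$, and split $f=g+h$ with $g=(f-f_{\widetilde S})\chi_{\widetilde S}$ and $h=f_{\widetilde S}\chi_{\widetilde S}+f\chi_{\widetilde S^c}$. The $g$-term is handled exactly as in Theorem~\ref{benn}: $\|g\|_{L^p}\le|\widetilde S|^{1/p}\|f\|_{\BMO{\cS}{p}}$, so $\dashint_S M_{\cS}g\le A_p\big(\prod_i c_d^i\big)^{1/p}\|f\|_{\BMO{\cS}{p}}$. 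For $h$, fix $z_0\in S$ and $T=T_1\times\cdots\times T_k\ni z_0$, and set $I=\{i:T_i\not\subset\widetilde S_i\}$. If $I=\emptyset$ then $T\subset\widetilde S$ and $\dashint_T|h|=|f_{\widetilde S}|\le\dashint_{\widetilde S}|f|\le\essinf_S M_{\cS}f$. For $i\in I$ the shapes $T_i$ and $S_i$ meet (both contain $z_{0,i}$) and $T_i$ meets $\widetilde S_i^c$, so engulfing in $\cS_i$ produces $\overline{T_i}\in\cS_i$ with $\overline{T_i}\supset\widetilde S_i\cup T_i$ and $|\overline{T_i}|\le c_e^i|T_i|$; set $\overline T=\prod_{i\in I}\overline{T_i}\times\prod_{j\notin I}T_j$. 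By strong decomposition $\overline T\in\cS$, with $T\subset\overline T$, $|\overline T|\le\big(\prod_{i\in I}c_e^i\big)|T|$, and — the point — $\overline T\cap\widetilde S=\prod_{i\in I}\widetilde S_i\times\prod_{j\notin I}T_j$ is again a shape of $\cS$, while for any $i_0\in I$ the shape $\overline T$ contains the slice of $S$ obtained by letting the $i_0$-th coordinate vary over $S_{i_0}$ and fixing the rest equal to those of $z_0$. When $I=\{1,\dots,k\}$ one has $\overline T\supset\widetilde S$ and the Bennett estimate goes through verbatim, $\dashint_T|h|\le\dashint_T|h-f_{\overline T}|+|f_{\overline T}|$ with $|f_{\overline T}|\le\dashint_{\overline T}|f|\le\essinf_S M_{\cS}f$ and $\dashint_T|h-f_{\overline T}|\le\big(\prod_i c_e^i\big)\big(\dashint_{\overline T}|h-f|+\|f\|_{\BMO{\cS}{p}}\big)\le 2\big(\prod_i c_e^i\big)\|f\|_{\BMO{\cS}{p}}$, using $\overline T\cap\widetilde S=\widetilde S\subset\overline T$. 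Granting the analogous bound $\dashint_T|h|\le c\,\|f\|_{\BMO{\cS}{p}}+\essinf_{S_{i_0}}(M_{\cS}f)_{\hat z_{0,i_0}}$ in the remaining, mixed case $\emptyset\ne I\subsetneq\{1,\dots,k\}$ (the obstacle, below), one gets $M_{\cS}h(z_0)\le c\,\|f\|_{\BMO{\cS}{p}}+\max_i\essinf_{S_i}(M_{\cS}f)_{\hat z_{0,i}}$ and, averaging over $z_0\in S$, \eqref{us}. The rest then follows Theorem~\ref{benn}: since $\max_i\essinf_{S_i}(M_{\cS}f)_{\hat z_i}\le M_{\cS}f$ a.e., finiteness of the right side of \eqref{us} forces $\dashint_S M_{\cS}f<\infty$ for every $S$, hence $M_{\cS}f<\infty$ a.e.; rearranging \eqref{us} gives $\dashint_S[M_{\cS}f-\max_i\essinf_{S_i}(M_{\cS}f)_{\hat z_i}]\le c\,\|f\|_{\BMO{\cS}{p}}$, i.e.\ $M_{\cS}f\in\BLO{\text{rec},\cS}{}$ with $\|M_{\cS}f\|_{\BLO{\text{rec},\cS}{}}\le c\,\|f\|_{\BMO{\cS}{p}}$; the $\cong$ version is immediate from $\|f\|_{\BMO{\cS}{p}}\lesssim\|f\|_{\BMO{\cS}{}}$.

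The step I expect to be the real obstacle is precisely the estimate of $\dashint_T|h|$ in the mixed case $\emptyset\ne I\subsetneq\{1,\dots,k\}$, where $T$ is as large as, or larger than, $\widetilde S$ in the coordinate blocks indexed by $I$ but strictly smaller in those indexed by $J=\{1,\dots,k\}\setminus I$. Transcribing Bennett literally — $\dashint_T|h|\le\dashint_T|h-f_{\overline T}|+|f_{\overline T}|$ with $\dashint_T|h-f_{\overline T}|\le\big(\prod_{i\in I}c_e^i\big)\dashint_{\overline T}|h-f_{\overline T}|$ — leaves a term $\tfrac{|\widetilde S|}{|\overline T|}\|f\|_{\BMO{\cS}{p}}$, and $|\widetilde S|/|\overline T|\asymp\prod_{j\in J}|\widetilde S_j|/|T_j|\to\infty$ as the $T_j$ shrink: the single constant $f_{\widetilde S}$ is too coarse on the thin shape $\overline T\cap\widetilde S$. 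What makes this genuinely hard, rather than merely annoying, is that the coefficient of $\dashint_S\max_i\essinf_{S_i}(M_{\cS}f)_{\hat z_i}$ in \eqref{us} must be exactly $1$ — any larger constant is useless, since that average is not controlled by $\|f\|_{\BMO{\cS}{p}}$. The fix I would attempt is to precede the $g/h$ splitting by a Fourier--Walsh-type decomposition $f=\sum_{A\subseteq\{1,\dots,k\}}\mathbb{P}_A f$, with $\mathbb{P}_A=\prod_{i\in A}(\mathrm{Id}-\mathbb{A}_i)\prod_{i\notin A}\mathbb{A}_i$ and $\mathbb{A}_i$ the average over $\widetilde S_i$ in the $i$-th variable: $\mathbb{P}_\emptyset f=f_{\widetilde S}$ is a constant, controlled by $\essinf_S M_{\cS}f$; for $A\ne\emptyset$, iterating the one-variable oscillation estimate (and using that $\mathbb{A}_i$ and $\mathrm{Id}-\mathbb{A}_i$ are bounded on $\BMO{\cS_j}{p}(\R^n)$ for $j\ne i$) gives $\|\mathbb{P}_A f\,\chi_{\widetilde S}\|_{L^p}\lesssim|\widetilde S|^{1/p}\|f\|_{\BMO{\cS}{p}}$, so the local parts are absorbed into the $\|f\|_{\BMO{\cS}{p}}$ term via $L^p$-boundedness; and the pieces $\mathbb{P}_A f$ depending on a proper subset of the coordinate blocks are handled by applying Theorem~\ref{benn} one block at a time and invoking the rectangular-$\BLO{}{}$ inclusion and upper-semilattice properties (Propositions~\ref{hm} and~\ref{latt}). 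Getting the bookkeeping to close up — so that, after all pieces are accounted for, the essinf coefficient is precisely $1$, which in particular means dominating $\essinf_{S_i}$ of the auxiliary one-variable maximal functions by $\essinf_{S_i}(M_{\cS}f)_{\hat z_i}$ with due care about null sets — is where the work lies.
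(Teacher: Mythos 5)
You have set the problem up correctly and, to your credit, you have put your finger on exactly the right difficulty: with the single Bennett splitting $f=g+h$, $g=(f-f_{\widetilde{S}})\chi_{\widetilde{S}}$, the mixed case $\emptyset\neq I\subsetneq\{1,\dots,k\}$ produces the uncontrollable term $\tfrac{|\widetilde{S}|}{|\overline{T}|}\|f\|_{\BMO{\cS}{p}}$, and the coefficient of the essential-infimum term must end up being exactly $1$. But at that point your argument stops being a proof. The proposed repair via Walsh-type projections is only a sketch: the boundedness of the partial averaging operators and their complements on the lower-dimensional $\BMO{}{}$ spaces, the absorption of every cross term, and above all the claim that after recombining all $2^k$ pieces the essinf coefficient is still $1$ (with the correct null-set bookkeeping) are asserted, not verified, and you yourself flag them as open. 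So the central estimate of the theorem --- the pointwise bound on the maximal function of the ``far'' part in the mixed directions --- is missing, and this is a genuine gap rather than a routine verification.

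The paper closes this gap with a different and simpler device: it decomposes the \emph{operator} rather than refining the decomposition of $f$. For each $I\subset\{1,\dots,k\}$ it considers the family $\cF_I(S)$ of shapes meeting $S$ that stick out of $\widetilde{S}$ precisely in the $I$-directions, sets $M_If(z)=\sup\{\dashint_T|f|:T\in\cF_I(S),\,T\ni z\}$, so that $M_{\cS}f=\max_I M_If$ on $S$, and then uses a splitting \emph{adapted to each} $I$: $g_I=(f-(f_y)_{\widetilde{S}_I})\chi_{\widetilde{S}}$ and $h_I=(f_y)_{\widetilde{S}_I}\chi_{\widetilde{S}}+f\chi_{\widetilde{S}^c}$, where the subtracted quantity is the partial average over $\widetilde{S}_I$ only, hence a function of the unengulfed coordinates $y$. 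Working slice-by-slice in $y\in T_{I^c}\subset\widetilde{S}_{I^c}$ and engulfing only in the $I$-directions gives $\dashint_{T_I}|(h_I)_y-(f_y)_{\overline{T}_I}|\leq c_e^I\|f\|_{\BMO{\cS_I}{p}(\R^n)}\leq c_e^Ic_k\|f\|_{\BMO{\cS}{p}}$, the last step by Theorem \ref{otherpaper} (this is where the differentiation hypothesis and the constant $c_k=2^{k-1}k$ enter), and hence the pointwise bound $M_Ih_I(z_0)\leq c_e^Ic_k\|f\|_{\BMO{\cS}{p}}+\essinf_{S_I}(M_{\cS}f)_{y_0}$ with essinf coefficient exactly $1$; your problematic term never appears because the ``constant'' $(f_y)_{\widetilde{S}_I}$ already matches $f$ in the thin directions. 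The $g_I$ parts are handled by the $L^p(\R^n)$ boundedness of $M_{\cS}$ on the full space (so your tensor-product reduction to boundedness of each $M_{\cS_i}$ is unnecessary), summed over the $2^k$ subsets, and absorbed into $c\|f\|_{\BMO{\cS}{p}}$, while the $h_I$ bounds are combined under a single maximum and each essinf term is dominated almost everywhere by $\max_{1\leq i\leq k}\essinf_{S_i}(M_{\cS}f)_{\hat{z}_i}$, since fixing more variables only increases the essential infimum; this is how the coefficient $1$ survives. If you want to salvage your write-up, replacing your projection scheme by this per-$I$ splitting is the missing idea.
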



\begin{proof}
Fix $f\in\BMO{\cS}{p}(\R^n)$ and $S\in\cS$. We write $S=S_1\times S_2\times\ldots\times S_k$, where $S_i\in\cS_i$. Here we are using the weak decomposition property of $\cS$. As each $\cS_i$ is an engulfing basis, each $S_i$ has associated to it a shape $\widetilde{S}_i\in\cS_i$ as in Definition \ref{engulf}, and so we write $\widetilde{S}$ for the shape in $\cS$ formed by $\widetilde{S}_1\times \widetilde{S}_2\times\ldots\times \widetilde{S}_k$. Here we are using the strong decomposition property of $\cS$. 

For $I\subset\{1,2,\ldots,k\}$, we denote by $I^c$ the set $\{1,2,\ldots,k\}\setminus I$. For a fixed shape $S\in\cS$ and $I\subset \{1,2,\ldots,k\}$, consider the family of shapes
\begin{equation}
\cF_I(S)=\{T\in\cS: T\cap{S}\neq\emptyset\;\text{and}\; T_i\cap \widetilde{S}_i^c\neq\emptyset\Leftrightarrow i\in I \}.
\end{equation}
This is the family of shapes that intersect $S$ and ``stick out" of $\widetilde{S}$ in the directions corresponding to $I$. The notation indicating dependence on $S$ may be suppressed when it has been fixed and there is little possibility of confusion.

Let $x$ denote the $I$-coordinates of $z$, that is those coordinates $\{z_i\in\R^{n_i}:i\in I\}$, and $y$ denote the $I^c$-coordinates of $z$, that is $\{z_i\in\R^{n_i}:i\in I^c\}$. When $|I|=1$, in which case $y$ is all coordinates except $z_i\in\R^{n_i}$ for some $1\leq{i}\leq{k}$, we write $y=\hat{z}_i$ as in previous sections.

Consider the basis $\cS_I$ in $X=\prod_{i \in I}\R^{n_i}$ defined by
$$
\cS_I=\prod_{i\in I}\cS_i.
$$
For $f\in\Loneloc(\R^n)$, define
$$
\|f\|_{\BMO{\cS_I}{p}\!(\R^n)}=\sup_y\|f_y \|_{\BMO{\cS_I}{p}\!(X) }.
$$

Applying Theorem \ref{otherpaper} to $\cS_I$ and then to $\cS$ which has the strong decomposition property with respect to $\{\cS_i\}_{i=1}^{k}$, we have
\begin{equation}\label{itos}
\|f\|_{\BMO{\cS_I}{p}\!(\R^n)}\leq\sup_y \sum_{i\in{I}}\|f_y\|_{\BMO{\cS_i}{p}\!(X) }\leq\sum_{i\in{I}}\|f\|_{\BMO{\cS_i}{p}\!(\R^n)} \leq c_k\|f\|_{\BMO{\cS}{p}\!(\R^n)},
\end{equation}
where $c_k=2^{k-1}k$.

Writing
$$
M_{I}f(z)=\sup\left\{\dashint_{T}\!|f|:T\in\cF_I(S) \,\text{and}\, T\ni{z} \right\},
$$ 
we have that 
$$
M_{\cS}f(z)=\max_{I\subset \{1,2,\ldots,k\}}M_{I}f(z)
$$
for $z\in S$. As such, we consider each $M_{I}f$ separately.

\underline{Case $I=\emptyset$ or $I^c=\emptyset$}: Here $\cF_I$ consists of those shapes that do not leave $\widetilde{S}$ in any direction when $I=\emptyset$, and those shapes that leave $\widetilde{S}$ in every direction when $I^c=\emptyset$. These two cases are treated together as the proof proceeds as in the proof of Theorem \ref{benn}. 

Write $f=g_I+h_I$, where $g_I=(f-f_{\widetilde{S}})\chi_{\widetilde{S}}$ and $h_I=f_{\widetilde{S}}\chi_{\widetilde{S}}+f\chi_{\widetilde{S}^c}$. Then, by the boundedness of $M_{\cS}$ on $L^p(\R^n)$, 
$$
\dashint_{S}\!M_{\cS}g_I\leq\frac{1}{|S|^{1/p}}\|M_{\cS}g_I\|_{L^p}\leq \frac{A_p}{|S|^{1/p}}\|g_I\|_{L^p}\leq A_pc_d^{1/p}\left(\dashint_{\widetilde{S}}|f-f_{\widetilde{S}}|^p\right)^{1/p},
$$
where $c_d=c_d^1\times c_d^2\times\cdots\times c_d^k$. Thus,
\begin{equation}
\label{gnot}
\dashint_{S}\!M_{I}g_I\leq \dashint_{S}\!M_{\cS}g_I \leq A_pc_d^{1/p} \|f\|_{\BMO{\cS}{p}}.
\end{equation}

Fix a point $z_0\in S$ and, for the moment, a shape $T\in\cF_{I}$ such that $T\ni z_0$. When $I=\emptyset$, this implies that $T\subset \widetilde{S}$ and so
$$
\dashint_{T}\!|h_I|\leq \dashint_{\widetilde{S}}\!|f|\leq M_{\cS}f(z)
$$
for every $z\in \widetilde{S}$. In particular, this is true for every $z\in S$ and so 
$$
\dashint_{T}\!|h_I|\leq \essinf_{S}M_{\cS}f.
$$
Hence, we have the pointwise bound
\begin{equation}
\label{hnot}
M_{I}h_I(z_0)\leq \essinf_{S}M_{\cS}f.
\end{equation}

When $I^c=\emptyset$, for each $1\leq i\leq k$ there is a shape $\overline{T}_i\in \cS_i$ containing $T_i$ and $\widetilde{S}_i$ such that $|\overline{T}_i|\leq c_e^i|T_i|$. We then create the shape $\overline{T}=\overline{T}_1\times\overline{T}_2\times\ldots\times\overline{T}_k$. This satisfies $\overline{T}\supset T\cup \widetilde{S}$ and $|\overline{T}|\leq c_e |T|$, where $c_e=c_e^1\times c_e^2\times\ldots\times c_e^k$, and so
\[
\begin{split}
\dashint_{T}\!|h_{I}-f_{\overline{T}}|\leq c_e \dashint_{\overline{T}}\!|h_{I}-f_{\overline{T}}|&= \frac{c_e}{|\overline{T}|}\left[|\widetilde{S}||f_{\widetilde{S}}-f_{\overline{T}}|+\int_{\overline{T}\cap\widetilde{S}^c}\!|f-f_{\overline{T}}| \right]\\
&\leq \frac{c_e}{|\overline{T}|}\left[\int_{\widetilde{S}}\!|f-f_{\overline{T}}|+\int_{\overline{T}\cap\widetilde{S}^c}\!|f-f_{\overline{T}}| \right]\\
&=c_e\dashint_{\overline{T}}\!|f-f_{\overline{T}}|\leq c_e \left(\dashint_{\overline{T}}\!|f-f_{\overline{T}}|^p\right)^{1/p}\leq c_e\|f\|_{\BMO{\cS}{p}}.
\end{split}
\]
Hence,
$$
\dashint_{T}\!|h_{I}|\leq \dashint_{T}\!|h_{I}-f_{\overline{T}}|+\dashint_{\overline{T}}\!|f|\leq c_e\|f\|_{\BMO{\cS}{p}} +M_{\cS}f(z)
$$
for every $z\in \overline{T}$, in particular for every $z\in S$, and so
$$
\dashint_{T}\!|h_{I}|\leq c_e\|f\|_{\BMO{\cS}{p}} + \essinf_{S}M_{\cS}f.
$$
Thus, we have the pointwise bound
\begin{equation}
\label{hfull}
M_{I}h_{I}(z_0)\leq c_e\|f\|_{\BMO{\cS}{p}} +\essinf_{S}M_{\cS}f.
\end{equation}

\underline{Case $I\neq\emptyset,I^c\neq\emptyset$}:
Here the shapes in $\cF_I$ leave $\widetilde{S}$ only in those directions corresponding to $I$. Write $S_I$ for $\prod_{i\in I}S_i$ and $\widetilde{S}_I$ for $\prod_{i\in I}\widetilde{S}_i$.

Write $f=g_I+h_I$, where $g_I=(f-(f_y)_{\widetilde{S}_{I}})\chi_{\widetilde{S}}$ and $h_I=(f_y)_{\widetilde{S}_{I}}\chi_{\widetilde{S}}+f\chi_{\widetilde{S}^c}$. Then, by the boundedness of $M_{\cS}$ on $L^p(\R^n)$,
$$
\dashint_{S}\!M_{\cS}g_I\leq \frac{1}{|S|^{1/p}}\|M_{\cS}g_I\|_{L^p}\leq \frac{A_p}{|S|^{1/p}}\|g_I\|_{L^p}=A_pc_d^{1/p}\left(\dashint_{\widetilde{S}}\!|f-(f_y)_{\widetilde{S}_I}|^p\right)^{1/p},
$$
where $c_d=c_d^1\times c_d^2\times\cdots\times c_d^k$. As 
$$
\left(\dashint_{\widetilde{S}}\!|f-(f_y)_{\widetilde{S}_I}|^p\right)^{1/p} = \left(\dashint_{\widetilde{S}_{I^c}}\left( \dashint_{\widetilde{S}_I}\!|f_y(x)-(f_y)_{\widetilde{S}_I}|^p\,{d}x\right){d}y \right)^{1/p}\leq\|f\|_{\BMO{\cS_I}{p}(\R^n)},
$$
we have
\begin{equation}
\label{gone}
\dashint_{S}\!M_{I}g_I\leq \dashint_{S}\!M_{\cS}g_I\leq A_pc_d^{1/p}\|f\|_{\BMO{\cS_I}{p}} \leq A_pc_d^{1/p}c_k\|f\|_{\BMO{\cS}{p}(\R^n)},
\end{equation}
where the last inequality follows from \eqref{itos}.

Fix a point $z_0\in S$ and, for the moment, a shape $T\in\cF_I$ such that $T\ni z_0$. For each $i\in I$, there is a shape $\overline{T}_i\in \cS_i$ containing $T_i$ and $\widetilde{S}_i$ such that $|\overline{T}_i|\leq c_e^i|T_i|$. We then create the shape $\overline{T}_I=\prod_{i\in I} \overline{T}_i$. This satisfies $\overline{T}_I\supset T_I\cup \widetilde{S}_I$ and $|\overline{T}_I|\leq c_e^I |T_I|$, where $c_e^I=\prod_{i\in I} c_e^i$. For $i\notin{I}$, write $\overline{T}_i=T_i$ and recall that $T_i\subset\widetilde{S}_i$. Then, we form the shape $\overline{T}=\overline{T}_1\times \overline{T}_2\times\cdots\times\overline{T}_k$.

Fixing $y\in T_{I^c}\subset \widetilde{S}_{I^c}$, we proceed as in the proof of Theorem \ref{benn}, but work only with the directions in $I$:
\[
\begin{split}
\dashint_{T_I}\!|(h_I)_y(x)-(f_y)_{\overline{T}_I}|\,{d}x&\leq c_e^I \dashint_{\overline{T}_I}\!|(h_I)_y(x)-(f_y)_{\overline{T}_I}|\,{d}x\\&= \frac{c_e^I}{|\overline{T}_I|}\left[\int_{\widetilde{S}_I}\!|(f_y)_{\widetilde{S}_I}-(f_y)_{\overline{T}_I}|\,{d}x+\int_{\overline{T}_I\cap\widetilde{S}_I^c}\!|f_y(x)-(f_y)_{\overline{T}_I}|\,{d}x \right]\\
&\leq \frac{c_e^I}{|\overline{T}_I|}\left[\int_{\widetilde{S}_I}\!|f_y(x)-(f_y)_{\overline{T}_I}|\,{d}x+\int_{\overline{T}_I\cap\widetilde{S}_I^c}\!|f_y(x)-(f_y)_{\overline{T}_I}|\,{d}x \right]\\
&=c_e^I\dashint_{\overline{T}_I}\!|f_y(x)-(f_y)_{\overline{T}_I}|\,{d}x\leq c_e^I \left(\dashint_{\overline{T}_I}\!|f_y(x)-(f_y)_{\overline{T}_I}|^p\,{d}x\right)^{1/p}
\\&\leq c_e^I\|f\|_{\BMO{\cS_I}{p}(\R^n)}\leq c_e^Ic_k\|f\|_{\BMO{\cS}{p}(\R^n)}
\end{split}
\]
by \eqref{itos}. Thus,
$$
\dashint_{T_I}\!|(h_I)_y(x)|\,{d}x\leq \dashint_{T_I}\!|(h_I)_y(x)-(f_y)_{\overline{T}_I}|\,{d}x+\dashint_{\overline{T}_I}\!|f_y(x)|\,{d}x\leq c_e^Ic_k\|f\|_{\BMO{\cS}{p}} +\dashint_{\overline{T}_I}\!|f_y(x)|\,{d}x.
$$
From here, integrating over $y\in T_{I^c}$, we have that 
$$
\dashint_{T}\!|h_I|\leq c_e^Ic_k\|f\|_{\BMO{\cS}{p}} + \dashint_{\overline{T}}\!|f|\leq c_e^Ic_k\|f\|_{\BMO{\cS}{p}}+M_{\cS}f(z)
$$
for any $z\in \overline{T}$. This is true, in particular, if the $I^c$ coordinates of $z$ are equal to $y_0$, where $y_0$ denotes the $I^c$-coordinates of $z_0$, and  $x\in{S_I}$. Thus,
\begin{equation}\label{hone}
M_{I}h_I(z_0)\leq c_e^Ic_k\|f\|_{\BMO{\cS}{p}} +\essinf_{S_I}(M_{\cS}f)_{y_0}.
\end{equation}

Combining \eqref{gnot} and \eqref{gone} yields
\begin{equation}
\begin{split}\label{g}
\dashint_{S}\max_{I}M_{I}g_I\leq \sum_{I}\dashint_{S}\!M_{I}g_I&\leq 2A_pc_d^{1/p}\|f\|_{\BMO{\cS}{p}} +
\sum_{I\neq\emptyset,I^c\neq\emptyset}\!A_pc_d^{1/p}c_k\|f\|_{\BMO{\cS}{p}}\\&\leq c\,\|f\|_{\BMO{\cS}{p}}.
\end{split}
\end{equation}
We combine \eqref{hnot}, \eqref{hfull}, and \eqref{hone} to yield
$$
\dashint_{S} \max_{I}M_{I}h_I\leq c\,\|f\|_{\BMO{\cS}{p}}+\dashint_{S}\max\left\{\essinf_{S}M_{\cS}f,\max_{I\neq\emptyset,I^c\neq\emptyset}\{\essinf_{S_I}(M_{\cS}f)_{y}\}\right\}.
$$
Since the infimum can only grow as we fix more variables, the inequality 
$$
\max\left\{\essinf_{S}M_{\cS}f,\max_{ I\neq\emptyset,I^c\neq\emptyset}\{\essinf_{S_I}(M_{\cS}f)_{y}\}\right\}\leq \max_{1\leq{i}\leq k}\{\essinf_{S_i}(M_{\cS}f)_{\hat{z}_i}\},
$$
holds almost everywhere in $S$, and so
\begin{equation}\label{h}
\dashint_{S} \max_{I}M_{I}h_I(z)\,{d}z\leq
c\,\|f\|_{\BMO{\cS}{p}}+\dashint_{S}\max_{1\leq{i}\leq k}\{\essinf_{S_i}(M_{\cS}f)_{\hat{z}_i}\}\,{d}z.
\end{equation}

Therefore, \eqref{g} and \eqref{h} imply that
\[
\begin{split}
\dashint_{S}\!M_{\cS}f(z)\,{d}z=\dashint_{S}\!\max_{I}M_{I}f(z)\,{d}z&\leq \dashint_{S}\!\max_{I}M_{I}(g_I(z)+h_I(z))\,{d}z\\&\leq \dashint_{S}\!\max_{I}M_{I}g_I(z)\,{d}z+\dashint_{S}\!\max_{I}M_{I}h_I(z)\,{d}z\\&\leq c\,\|f\|_{\BMO{\cS}{p}}+\dashint_{S}\max_{1\leq{i}\leq k}\{\essinf_{S_i}(M_{\cS}f)_{\hat{z}_i}\}\,{d}z.
\end{split}
\]
\end{proof}

We end by giving two examples of bases that satisfy the conditions of Theorem \ref{theorem}.

\begin{example}
The first example, which is in many ways the model case and the motivation for studying this problem, is $\cR$. When $k=n$, and so $n_i=1$ for every $1\leq{i}\leq{n}$, the basis $\cR$ has the strong decomposition property with respect to $\{\cI_i\}_{i=1}^{n}$, where $\cI_i$ is the basis of all intervals in $\R$. Each basis $\cI_i$ is both a differentiation basis that differentiates $\Loneloc(\R)$ and an engulfing basis (one can take $c_d=2$ and $c_e=4$). Moreover, the strong maximal function, $M_{s}$, is well known to be bounded on $L^p(\R^n)$ for all $1<p<\infty$ (\cite{jmz}). The anisotropic version of the John-Nirenberg inequality due to Korenovskii (\cite{kor,ko}) implies that $\BMO{\cR}{p}(\R^n)\cong\BMO{\cR}{}(\R^n)$ for all $1<{p}<\infty$. Therefore, $M_{s}$ maps $\BMO{\cR}{}(\R^n)$ to $\BLO{\text{rec},\cR}{}(\R\times\R\times\cdots\times\R)$.
\end{example}

\begin{example}
A second example is when $k=2$. Denote by $\cB_{n-1}$ the basis of all Euclidean balls in $\R^{n-1}$ and by $\cI$ the basis of intervals in $\R$. The differentiation and engulfing properties of these bases are known. In $\R^n=\R^{n-1}\times\R$, define a cylinder to be the product of a ball $B\in \cB_{n-1}$ and an interval $I\in \cI$. The basis of all such cylinders $\cC$ has the strong decomposition property with respect to $\{\cB_{n-1},\cI \}$. 

By comparing (in the sense of Definition 2.2 in \cite{dg}) these shapes to a family of rectangles, the $L^p(\R^n)$ boundedness of $M_{\cC}$ for any $1<p<\infty$ follows from that of $M_{s}$. Moreover, it can be shown along the lines of the work of Korenovskii \cite{kor,ko} that the John-Nirenberg inequality holds for $\cC$, and so $\BMO{\cC}{p}(\R^n)\cong\BMO{\cC}{}(\R^n)$ holds for all $1<p<\infty$. Therefore, $M_{\cC}$ maps $\BMO{\cC}{}(\R^n)$ to $\BLO{\text{rec},\cC}{}(\R^{n-1}\times\R)$.
\end{example}
 
\section*{{\bf Acknowledgements}}

The authors would like to thank Alex Stokolos for bringing to their attention the open problem of the boundedness of the strong maximal function on strong $\BMO{}{}$ and for fruitful discussions. 


\bibliographystyle{amsplain}


\end{document}